\newtheorem{theorem}{Theorem}[section]
\newtheorem{lemma}[theorem]{Lemma}
\newtheorem{proposition}[theorem]{Proposition}
\newtheorem{corollary}[theorem]{Corollary}
\theoremstyle{definition}
\newtheorem{example}[theorem]{Example}
\theoremstyle{remark}
\newtheorem{remark}[theorem]{Remark}
\numberwithin{equation}{section}
\begin{document}

\setcounter{page}{1}

\title[Characterizations of BMO and Lipschitz spaces]{Some characterizations of BMO and Lipschitz spaces in the Schr\"{o}dinger setting}

\author[H. Wang]{Cong Chen and Hua Wang}

\address{School of Mathematics and System Science, Xinjiang University, Urumqi 830046, P. R. China}
\email{\textcolor[rgb]{0.00,0.00,0.84}{3221043817@qq.com}}
\email{\textcolor[rgb]{0.00,0.00,0.84}{wanghua@pku.edu.cn}}
\dedicatory{Dedicated to the memory of Li Xue}

\let\thefootnote\relax\footnote{Copyright 2018 by the Tusi Mathematical Research Group.}

\subjclass[2010]{Primary 42B35; Secondary 35J10.}

\keywords{\textrm{BMO} spaces, Lipschitz spaces, Schr\"{o}dinger operator, Reverse H\"{o}lder class and Weights.}

\date{}

\begin{abstract}
We consider the Schr\"{o}dinger operator $\mathcal{L}=-\Delta+V$ on $\mathbb R^d$, $d\geq3$, where the nonnegative
potential $V$ belongs to the reverse H\"{o}lder class $RH_s$ for some $s\geq d/2$. A real-valued function $f\in L^1_{\mathrm{loc}}(\mathbb R^d)$ belongs to the (BMO) space $\mathrm{BMO}_{\rho,\theta}(\mathbb R^d)$ with $0<\theta<\infty$ if
\begin{equation*}
\|f\|_{\mathrm{BMO}_{\rho,\theta}}
:=\sup_{B(x_0,r)}\bigg(1+\frac{r}{\rho(x_0)}\bigg)^{-\theta}\bigg(\frac{1}{|B(x_0,r)|}\int_{B(x_0,r)}\big|f(x)-f_{B}\big|\,dx\bigg),
\end{equation*}
where the supremum is taken over all balls $B(x_0,r)\subset\mathbb R^d$, $\rho(\cdot)$ is the critical radius function in the Schr\"{o}dinger context and
\begin{equation*}
f_{B}:=\frac{1}{|B(x_0,r)|}\int_{B(x_0,r)}f(y)\,dy.
\end{equation*}
A real-valued function $f\in L^1_{\mathrm{loc}}(\mathbb R^d)$ belongs to the (Lipschitz) space $\mathrm{Lip}_{\beta}^{\rho,\theta}(\mathbb R^d)$ with $0<\beta<1$ and $0<\theta<\infty$ if
\begin{equation*}
\|f\|_{\mathrm{Lip}_{\beta}^{\rho,\theta}}
:=\sup_{B(x_0,r)}\bigg(1+\frac{r}{\rho(x_0)}\bigg)^{-\theta}
\bigg(\frac{1}{|B(x_0,r)|^{1+\beta/d}}\int_{B(x_0,r)}\big|f(x)-f_{B}\big|\,dx\bigg).
\end{equation*}
It can be easily seen that $\mathrm{BMO}_{\rho,\theta}(\mathbb R^d)$ (or $\mathrm{Lip}_{\beta}^{\rho,\theta}(\mathbb R^d)$) is a function space which is larger than the classical BMO (or Lipschitz) space. In this paper, we give some new characterizations of BMO and Lipschitz spaces associated with the Schr\"{o}dinger operator $\mathcal{L}$. We extend some previous works of Bongioanni--Harboure--Salinas and Liu--Sheng to the weighted case. The classes of weights considered here are larger than the classical Muckenhoupt classes.
\end{abstract}
\maketitle

\section{\textbf{Introduction and preliminaries}}
In this paper, we are concerned with BMO and Lipschitz spaces in the Schr\"{o}dinger setting. We shall establish some new characterizations of these function spaces. Let $d\geq3$ be a positive integer and $\mathbb R^d$ be the $d$-dimensional Euclidean space, and let $V:\mathbb R^d\rightarrow\mathbb R$, $d\geq3$, be a non-negative locally integrable function which belongs to the \emph{reverse H\"older class} $RH_s(\mathbb R^d)$ for some exponent $s\in(1,\infty]$; i.e., there exists a positive constant $C=C(s,V)>0$ such that the following \emph{reverse H\"older inequality}
\begin{equation*}
\bigg(\frac{1}{|B|}\int_B V(y)^s\,dy\bigg)^{1/s}\leq C\cdot\bigg(\frac{1}{|B|}\int_B V(y)\,dy\bigg)
\end{equation*}
holds for every ball $B$ in $\mathbb R^d$, with the usual modification made when $s=\infty$. In particular, if $V$ is a nonnegative polynomial, then $V\in RH_{\infty}(\mathbb R^d)$. Let us consider the \emph{Schr\"{o}dinger operator}
\begin{equation*}
\mathcal{L}:=-\Delta+V \quad \mbox{on}~~~ \mathbb R^d,
\end{equation*}
where $\Delta$ is the standard Laplacian. As in \cite{shen}, for any given $V\in RH_s(\mathbb R^d)$ with $s\geq d/2$ and $d\geq3$, we introduce the \emph{critical radius function} $\rho(x)=\rho(x;V)$ which is given by
\begin{equation}\label{rho}
\rho(x):=\sup\bigg\{r>0:\frac{1}{r^{d-2}}\int_{B(x,r)}V(y)\,dy\leq1\bigg\},\quad x\in\mathbb R^d,
\end{equation}
where $B(x,r)$ denotes the open ball with the center at $x$ and radius $r$. It is well known that this auxiliary function satisfies $0<\rho(x)<\infty$ for any $x\in\mathbb R^d$ under the above assumption on $V$ (assume that $V\not\equiv0$, see \cite{shen}).
\begin{example}
The Schr\"{o}dinger operator $\mathcal{L}=-\Delta+V$ can be viewed as a perturbation of the Laplacian.
\begin{enumerate}
  \item When $V=1$, we obtain $\rho(x)=1$ for any $x\in\mathbb R^d$.
  \item When $V(x)=|x|^2$ and $\mathcal{L}$ becomes the Hermite operator, we obtain $\rho(x)\approx(1+|x|)^{-1}$.
\end{enumerate}
\end{example}
We need the following known result concerning the critical radius function \eqref{rho}, which was proved by Shen in \cite{shen}.
\begin{lemma}[\cite{shen}]\label{N0}
If $V\in RH_s(\mathbb R^d)$ with $s\geq d/2$ and $d\geq3$, then there exist two positive constants $C_0\geq 1$ and $N_0>0$ such that
\begin{equation}\label{com}
\frac{\,1\,}{C_0}\bigg(1+\frac{|x-y|}{\rho(x)}\bigg)^{-N_0}\leq\frac{\rho(y)}{\rho(x)}
\leq C_0\bigg(1+\frac{|x-y|}{\rho(x)}\bigg)^{\frac{N_0}{N_0+1}}
\end{equation}
for all $x,y\in\mathbb R^d.$
\end{lemma}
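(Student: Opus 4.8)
The plan is to reduce the two-sided estimate \eqref{com} to a single one-sided lower bound and to extract that bound from two scaling properties of the auxiliary quantity
\[
\psi(x,r):=\frac{1}{r^{d-2}}\int_{B(x,r)}V(y)\,dy,\qquad x\in\mathbb R^d,\ r>0,
\]
in terms of which $\rho(x)=\sup\{r>0:\psi(x,r)\le1\}$. First I would record two elementary facts. On the one hand, the reverse H\"older inequality combined with H\"older's inequality yields the scaling estimate
\[
\psi(x,r)\le C\Big(\frac{r}{R}\Big)^{\delta}\psi(x,R)\qquad(0<r\le R),\qquad \delta:=2-\frac{d}{s}>0,
\]
where $\delta>0$ may be assumed after invoking the self-improvement (Gehring) property of the reverse H\"older class. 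On the other hand, I would use the well-known fact that $V\in RH_s$ with $s\ge d/2$ forces $V(y)\,dy$ to be a doubling measure, so that $\int_{B(x,2R)}V\le C\int_{B(x,R)}V$, and hence $\int_{B(x,\lambda\rho(x))}V\le C\lambda^{\gamma}\rho(x)^{d-2}$ for $\lambda\ge1$; here I have used that continuity of $\psi(x,\cdot)$ gives the normalization $\psi(x,\rho(x))=1$, i.e.\ $\int_{B(x,\rho(x))}V=\rho(x)^{d-2}$.

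The heart of the argument is the left inequality of \eqref{com},
\[
\rho(y)\ge \frac{1}{C}\,\rho(x)\Big(1+\frac{|x-y|}{\rho(x)}\Big)^{-N_0}.
\]
To prove it I set $R=|x-y|$ and treat the regime $R\ge\rho(x)$, the complementary regime $R\le\rho(x)$ being handled by a local comparability statement ($\rho(y)\approx\rho(x)$) that follows from the two facts above. Using $B(y,R)\subset B(x,2R)$ and the doubling bound I would control $\int_{B(y,R)}V$, and hence $\psi(y,R)$, by a power of $R/\rho(x)$; then, scaling down by means of the reverse H\"older estimate, I would show that $\psi(y,r)\le1$ for all $r$ up to a threshold $r_\ast$ comparable to $\rho(x)\,(R/\rho(x))^{-N_0}$. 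By the definition of $\rho(y)$ this gives $\rho(y)\ge r_\ast$, which is the desired bound.

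Finally, the right inequality of \eqref{com} would be deduced from the lower bound by an elementary algebraic manipulation: applying the lower bound with the roles of $x$ and $y$ interchanged produces an implicit inequality of the form $\rho(x)\ge c\,\rho(y)\big(1+R/\rho(y)\big)^{-N_0}$, and solving it for $\rho(y)$ (distinguishing $R\le\rho(y)$ from $R>\rho(y)$) yields $\rho(y)\le C\rho(x)\big(1+R/\rho(x)\big)^{N_0/(N_0+1)}$. It is precisely this inversion that turns the exponent $N_0$ into the smaller exponent $N_0/(N_0+1)$ and thereby accounts for the asymmetry of \eqref{com}.

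I expect the main obstacle to be the lower bound. Reverse H\"older scaling controls $\psi(x,\cdot)$ only from below as the radius grows, and places no ceiling on how fast $\int_{B(x,r)}V$ may increase; the upper control needed to bound $\psi(y,\cdot)$ must therefore come entirely from the doubling of $V(y)\,dy$. The delicate point is to combine the global mass bound valid on the large ball $B(x,2R)$ with the reverse H\"older estimate, which localizes that mass back down to small balls around $y$, so that the two exponents $\gamma$ and $\delta$ assemble into a single admissible $N_0$. Preserving this locality in the transfer, rather than crudely estimating $\int_{B(y,r)}V$ by the mass of the whole ball $B(x,2R)$, is what makes the estimate work.
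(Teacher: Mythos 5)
The paper offers no proof of this lemma at all: it is quoted verbatim from Shen \cite{shen}, so the only meaningful comparison is with Shen's original argument (Lemma 1.4 of \cite{shen}, built on his Lemma 1.2). Your sketch is correct and reconstructs essentially that argument — the reverse H\"older/Gehring scaling estimate $\psi(x,r)\le C(r/R)^{\delta}\psi(x,R)$ together with the doubling of $V(y)\,dy$ and the normalization $\psi(x,\rho(x))=1$ yields the lower bound $\rho(y)\gtrsim\rho(x)\big(1+|x-y|/\rho(x)\big)^{-N_0}$, and the upper bound with the smaller exponent $N_0/(N_0+1)$ then follows, exactly as you say, by interchanging $x$ and $y$ and solving the resulting implicit inequality for $\rho(y)$.
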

By a weight, we always mean a non-negative locally integrable function $\omega$ on $\mathbb R^d$. Given a Lebesgue measurable set $E\subset\mathbb R^d$ and a weight $\omega$, we use the notation $|E|$ to denote the Lebesgue measure of $E$ and $\omega(E)$ to denote the weighted measure of $E$,
\begin{equation*}
\omega(E):=\int_E \omega(x)\,dx.
\end{equation*}
For any given ball $B=B(x_0,r)$ and $\lambda\in(0,\infty)$, we will write $\lambda B$ for the $\lambda$-dilate ball, which is the ball with the same center $x_0$ and radius $\lambda r$; that is $\lambda B=B(x_0,\lambda r)$. Similarly, $Q(x_0,r)$ denotes the cube centered at $x_0$ and with the sidelength $r$. Here and in what follows, only cubes with sides parallel to the coordinate axes are considered, and $\lambda Q=Q(x_0,\lambda r)$. We shall consider two classes of weights that are given in terms of the critical radius function \eqref{rho}. As in \cite{bong1} (see also \cite{bong2}), we say that a weight $\omega$ belongs to the class $A^{\rho,\theta}_p(\mathbb R^d)$ for $1<p<\infty$ and $0<\theta<\infty$, if there is a positive constant $C>0$ such that for all balls $B=B(x_0,r)\subset\mathbb R^d$ with $x_0\in\mathbb R^d$ and $r\in(0,\infty)$,
\begin{equation*}
\bigg(\frac{1}{|B|}\int_B \omega(x)\,dx\bigg)^{1/p}\bigg(\frac{1}{|B|}\int_B \omega(x)^{-{p'}/p}\,dx\bigg)^{1/{p'}}
\leq C\cdot\bigg(1+\frac{r}{\rho(x_0)}\bigg)^{\theta},
\end{equation*}
where $p'$ denotes the \emph{conjugate index} of $p$, namely, $1/p+1/{p'}=1$. For $p=1$ and $0<\theta<\infty$, we also say that a weight $\omega$ belongs to the class $A^{\rho,\theta}_1(\mathbb R^d)$, if there is a positive constant $C>0$ such that for all balls $B=B(x_0,r)\subset\mathbb R^d$ with $x_0\in\mathbb R^d$ and $r\in(0,\infty)$,
\begin{equation*}
\frac1{|B|}\int_B \omega(x)\,dx\leq C\cdot\bigg(1+\frac{r}{\rho(x_0)}\bigg)^{\theta}\underset{x\in B}{\mbox{ess\,inf}}\;\omega(x).
\end{equation*}
Since
\begin{equation}\label{cc}
1\leq\bigg(1+\frac{r}{\rho(x_0)}\bigg)^{\theta_1}\leq\bigg(1+\frac{r}{\rho(x_0)}\bigg)^{\theta_2}
\end{equation}
whenever $0<\theta_1<\theta_2<\infty$, then for given $p$ with $1\leq p<\infty$, by definition, we have
\begin{equation*}
A_p(\mathbb R^d)\subset A^{\rho,\theta_1}_p(\mathbb R^d)\subset A^{\rho,\theta_2}_p(\mathbb R^d),
\end{equation*}
where $A_p(\mathbb R^d)$ denotes the classical Muckenhoupt class (see \cite{muck} and \cite{coi}). For any given $1\leq p<\infty$, as the classes $A^{\rho,\theta}_p(\mathbb R^d)$ increase with respect to $\theta$, it is natural to define
\begin{equation*}
A^{\rho,\infty}_p(\mathbb R^d):=\bigcup_{\theta>0}A^{\rho,\theta}_p(\mathbb R^d).
\end{equation*}
Consequently, one has the inclusion relation
\begin{equation*}
A_p(\mathbb R^d)\subset A^{\rho,\infty}_p(\mathbb R^d).
\end{equation*}
However, the converse is not true, it is easy to check that the above inclusion is strict. In fact, if $\omega\in A_p(\mathbb R^d)$ for some $p>1$, then $\omega(x)\,dx$ is a doubling measure (see \cite{grafakos,grafakos2}), i.e., there exists a universal constant $C>0$ such that for any ball $B$
\begin{equation*}
\omega(2B)\leq C\omega(B).
\end{equation*}
If $\omega\in A^{\rho,\theta}_p(\mathbb R^d)$ for some $p>1$ and $\theta>0$, then $\omega(x)\,dx$ may not be a doubling measure. For example, the weight
\begin{equation*}
\omega_{\gamma}(x)=(1+|x|)^{\gamma}\in A^{\rho,\theta}_p(\mathbb R^d)\quad \mbox{for any}~~ \gamma>d(p-1)
\end{equation*}
provided that $V=1$ and $\rho(\cdot)\equiv1$. It is easy to see that such choice of $\omega_{\gamma}$ yields $\omega_{\gamma}(x)\,dx$ is not a doubling measure, hence it does not belong to $A_q(\mathbb R^d)$ for any $1<q<\infty$. The situation is more complicated. We can define (generalized) doubling classes of weights adapted to the Schr\"{o}dinger context, see \cite{bong9} and \cite{bong10}, for example.

For any given $\theta>0$, let us introduce the maximal operator which is given in terms of the critical radius function \eqref{rho}.
\begin{equation*}
\mathcal{M}_{\rho,\theta}f(x):=\sup_{r>0}\bigg(1+\frac{r}{\rho(x)}\bigg)^{-\theta}\frac{1}{|B(x,r)|}\int_{B(x,r)}|f(y)|\,dy,\quad x\in\mathbb R^d.
\end{equation*}
The classes $A^{\rho,\infty}_p(\mathbb R^d)$ are closely connected with the family of maximal operators $\mathcal{M}_{\rho,\theta}$. Observe that a weight $\omega$ belongs to the class $A^{\rho,\infty}_1(\mathbb R^d)$ if and only if there exists a positive number $\theta>0$ such that $\mathcal{M}_{\rho,\theta}(\omega)(x)\leq C\cdot\omega(x)$, for a.e.~$x\in\mathbb R^d$, where the constant $C>0$ is independent of $\omega$. Moreover, as in the classical setting, the classes of weights are characterized by the weighted boundedness of the corresponding maximal operators. Let $1<p<\infty$. It can be shown that $\omega\in A^{\rho,\infty}_p(\mathbb R^d)$ if and only if there exists a positive number $\theta>0$ such that $\mathcal{M}_{\rho,\theta}$ is bounded on $L^p(\omega)$ (see \cite{bong6} and \cite{bong7}, for example). In addition, for some fixed $\theta>0$, we have the following inclusion relations (see \cite{tang})
\begin{equation*}
A^{\rho,\theta}_1(\mathbb R^d)\subset A^{\rho,\theta}_{p_1}(\mathbb R^d)\subset A^{\rho,\theta}_{p_2}(\mathbb R^d),
\end{equation*}
whenever $1\leq p_1<p_2<\infty$. As in \cite{tang} (see also \cite{bui} and \cite{wang}), we say that a weight $\omega$ is in the class
$A^{\rho,\theta}_{p,q}(\mathbb R^d)$ for $1<p,q<\infty$ and $0<\theta<\infty$, if there exists a positive constant $C>0$ such that
\begin{equation*}
\bigg(\frac{1}{|B|}\int_B \omega(x)^q\,dx\bigg)^{1/q}\bigg(\frac{1}{|B|}\int_B \omega(x)^{-{p'}}\,dx\bigg)^{1/{p'}}
\leq C\cdot\bigg(1+\frac{r}{\rho(x_0)}\bigg)^{\theta}
\end{equation*}
holds for all balls $B=B(x_0,r)\subset\mathbb R^d$. For the case $p=1$, we also say that a weight $\omega$ is in the class
$A^{\rho,\theta}_{1,q}(\mathbb R^d)$ for $1<q<\infty$ and $0<\theta<\infty$, if there exists a positive constant $C>0$ such that
\begin{equation*}
\bigg(\frac1{|B|}\int_B \omega(x)^q\,dx\bigg)^{1/q}\leq C\cdot\bigg(1+\frac{r}{\rho(x_0)}\bigg)^{\theta}
\underset{x\in B}{\mbox{ess\,inf}}\;\omega(x)
\end{equation*}
holds for all balls $B=B(x_0,r)\subset\mathbb R^d$. In view of \eqref{cc}, for any $1\leq p,q<\infty$, we find that
\begin{equation*}
A_{p,q}(\mathbb R^d)\subset A^{\rho,\theta_1}_{p,q}(\mathbb R^d)\subset A^{\rho,\theta_2}_{p,q}(\mathbb R^d),
\end{equation*}
whenever $0\leq\theta_1<\theta_2<\infty$. Here $A_{p,q}(\mathbb R^d)$ denotes the classical Muckenhoupt--Wheeden class (see \cite{muckenhoupt3}). Correspondingly, for $1\leq p,q<\infty$, we define
\begin{equation*}
A^{\rho,\infty}_{p,q}(\mathbb R^d):=\bigcup_{\theta>0}A^{\rho,\theta}_{p,q}(\mathbb R^d).
\end{equation*}
\begin{remark}
A few comments are in order:
\begin{enumerate}
\item As in the classical Muckenhoupt theory, we define the $A^{\rho,\theta}_{p}$ and $A^{\rho,\theta}_{p,q}$ characteristic constants of $\omega$ as follows ($B=B(x_0,r)$):
\begin{equation*}
\begin{split}
[\omega]_{A^{\rho,\theta}_p}
&:=\sup_{B\subset\mathbb R^d}\bigg(1+\frac{r}{\rho(x_0)}\bigg)^{-\theta}\bigg(\frac{1}{|B|}\int_B \omega(x)\,dx\bigg)^{1/p}\bigg(\frac{1}{|B|}\int_B \omega(x)^{-{p'}/p}\,dx\bigg)^{1/{p'}},~\mbox{when}~ 1<p<\infty,\\
[\omega]_{A^{\rho,\theta}_1}
&:=\sup_{B\subset\mathbb R^d}\bigg(1+\frac{r}{\rho(x_0)}\bigg)^{-\theta}\bigg(\frac{1}{|B|}\int_B \omega(x)\,dx\bigg)
\bigg(\underset{x\in B}{\mbox{ess\,inf}}\;\omega(x)\bigg)^{-1},~\mbox{when}~ p=1,\\
[\omega]_{A^{\rho,\theta}_{p,q}}
&:=\sup_{B\subset\mathbb R^d}\bigg(1+\frac{r}{\rho(x_0)}\bigg)^{-\theta}\bigg(\frac{1}{|B|}\int_B \omega(x)^q\,dx\bigg)^{1/q}\bigg(\frac{1}{|B|}\int_B \omega(x)^{-{p'}}\,dx\bigg)^{1/{p'}},~\mbox{when}~ 1<p,q<\infty,\\
[\omega]_{A^{\rho,\theta}_{1,q}}
&:=\sup_{B\subset\mathbb R^d}\bigg(1+\frac{r}{\rho(x_0)}\bigg)^{-\theta}\bigg(\frac1{|B|}\int_B \omega(x)^q\,dx\bigg)^{1/q}
\bigg(\underset{x\in B}{\mbox{ess\,inf}}\;\omega(x)\bigg)^{-1},~\mbox{when}~ p=1,1<q<\infty.
\end{split}
\end{equation*}
\item We mention that in the definition of both classes of weights $A^{\rho,\infty}_{p}(\mathbb R^d)$ and $A^{\rho,\infty}_{p,q}(\mathbb R^d)$ balls can be replaced by cubes, due to \eqref{com}.
\item For more results about weighted norm inequalities of various integral operators in harmonic analysis (such as first or second order Riesz--Schr\"{o}dinger transforms, Schr\"{o}dinger type singular integrals, fractional integrals, etc.), one can see \cite{bong1,bong2,bong4,bong5,bong6,bong7,bong8,bong9,bong10,bui,tang,tang2}.
\end{enumerate}
\end{remark}
In 2011, Bongioanni--Harboure--Salinas \cite{bong3} introduced a new class of function spaces (see also \cite{bong2}). According to \cite{bong3}, the new BMO space $\mathrm{BMO}_{\rho,\infty}(\mathbb R^d)$ is defined by
\begin{equation*}
\mathrm{BMO}_{\rho,\infty}(\mathbb R^d):=\bigcup_{\theta>0}\mathrm{BMO}_{\rho,\theta}(\mathbb R^d),
\end{equation*}
where for any fixed $0<\theta<\infty$ the space $\mathrm{BMO}_{\rho,\theta}(\mathbb R^d)$ is defined to be the set of all locally integrable functions $f$ satisfying
\begin{equation}\label{BM}
\frac{1}{|Q(x_0,r)|}\int_{Q(x_0,r)}\big|f(x)-f_{Q}\big|\,dx\leq C\cdot\bigg(1+\frac{r}{\rho(x_0)}\bigg)^{\theta},
\end{equation}
for all $x_0\in\mathbb R^d$ and $r\in(0,\infty)$, $f_{Q}$ denotes the mean value of $f$ on $Q(x_0,r)$, that is,
\begin{equation*}
f_{Q}:=\frac{1}{|Q(x_0,r)|}\int_{Q(x_0,r)}f(y)\,dy.
\end{equation*}
A norm for $f\in \mathrm{BMO}_{\rho,\theta}(\mathbb R^d)$, denoted by $\|f\|_{\mathrm{BMO}_{\rho,\theta}}$, is given by the infimum of the constants satisfying \eqref{BM}, after identifying functions that differ by a constant, or equivalently,
\begin{equation*}
\|f\|_{\mathrm{BMO}_{\rho,\theta}}
:=\sup_{Q(x_0,r)}\bigg(1+\frac{r}{\rho(x_0)}\bigg)^{-\theta}\bigg(\frac{1}{|Q(x_0,r)|}\int_{Q(x_0,r)}\big|f(x)-f_{Q}\big|\,dx\bigg),
\end{equation*}
where the supremum is taken over all cubes $Q(x_0,r)$ with $x_0\in\mathbb R^d$ and $r\in(0,\infty)$. Note that if we let $\theta=0$ in \eqref{BM}, we obtain the usual (John--Nirenberg) BMO space (see \cite{john}). Define
\begin{equation*}
\mathrm{BMO}_{\rho,\theta}(\mathbb R^d):=\Big\{f\in L^1_{\mathrm{loc}}(\mathbb R^d):\|f\|_{\mathrm{BMO}_{\rho,\theta}}<\infty\Big\}.
\end{equation*}
With the above definition in mind, one has
\begin{equation*}
\mathrm{BMO}(\mathbb R^d)\subset \mathrm{BMO}_{\rho,\theta_1}(\mathbb R^d)\subset \mathrm{BMO}_{\rho,\theta_2}(\mathbb R^d)
\end{equation*}
whenever $0<\theta_1<\theta_2<\infty$, and hence
\begin{equation*}
\mathrm{BMO}(\mathbb R^d)\subset\mathrm{BMO}_{\rho,\infty}(\mathbb R^d).
\end{equation*}
Moreover, it can be shown that the classical BMO space is properly contained in $\mathrm{BMO}_{\rho,\infty}(\mathbb R^d)$ (see \cite{bong2,bong3,tang} for more examples).

We give a version of John--Nirenberg inequality suitable for the new BMO space $\mathrm{BMO}_{\rho,\theta}(\mathbb R^d)$, which can be found in \cite[Proposition 4.2]{tang}.
\begin{lemma}[\cite{tang}]\label{expbmo}
If $f\in \mathrm{BMO}_{\rho,\theta}(\mathbb R^d)$ with $0<\theta<\infty$, then there exist two positive constants $C_1$ and $C_2$ such that for every cube $\mathcal{Q}=Q(x_0,r)$ and every $\lambda>0$,
\begin{equation*}
\Big|\Big\{x\in \mathcal{Q}:|f(x)-f_{\mathcal{Q}}|>\lambda\Big\}\Big|
\leq C_1|\mathcal{Q}|\exp\bigg\{-\bigg(1+\frac{r}{\rho(x_0)}\bigg)^{-(N_0+1)\theta}\frac{C_2\lambda}{\|f\|_{\mathrm{BMO}_{\rho,\theta}}}\bigg\},
\end{equation*}
where $N_0$ is the constant appearing in Lemma \ref{N0}.
\end{lemma}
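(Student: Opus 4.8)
The plan is to adapt the classical Calderón--Zygmund stopping-time proof of the John--Nirenberg inequality, localized to a single fixed cube $\mathcal{Q}=Q(x_0,r)$, with the essential new ingredient being a uniform control of the growth factor $(1+r/\rho(x_0))^{\theta}$ across \emph{all} dyadic subcubes of $\mathcal{Q}$. First I would fix $\mathcal{Q}=Q(x_0,r)$ and reduce the whole statement to estimating, for every dyadic subcube $Q'\subseteq\mathcal{Q}$, the mean oscillation $\frac{1}{|Q'|}\int_{Q'}|f-f_{Q'}|\,dx$. By the definition of $\|f\|_{\mathrm{BMO}_{\rho,\theta}}$, this quantity is at most $\|f\|_{\mathrm{BMO}_{\rho,\theta}}\,(1+r'/\rho(c'))^{\theta}$, where $c'$ and $r'$ denote the center and sidelength of $Q'$. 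The difficulty is that \emph{both} the center and the sidelength vary as we descend to subcubes, so this bound is not uniform on its face.

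The key step --- and the one that produces the exponent $(N_0+1)\theta$ in the statement --- is to establish a uniform estimate: there is a constant $C>0$ (depending only on $d$, $N_0$, $C_0$, $\theta$) such that for every subcube $Q'=Q(c',r')\subseteq\mathcal{Q}$,
\[
\left(1+\frac{r'}{\rho(c')}\right)^{\theta}\le C\left(1+\frac{r}{\rho(x_0)}\right)^{(N_0+1)\theta}.
\]
Since $c'\in\mathcal{Q}$ we have $|c'-x_0|\le \tfrac{\sqrt{d}}{2}\,r$ and $r'\le r$. Applying the lower bound in \eqref{com} with the roles $x=x_0$, $y=c'$ gives $\rho(x_0)/\rho(c')\le C_0\big(1+|c'-x_0|/\rho(x_0)\big)^{N_0}\lesssim\big(1+r/\rho(x_0)\big)^{N_0}$, and inserting this into $1+r'/\rho(c')\le 1+(r/\rho(x_0))\cdot(\rho(x_0)/\rho(c'))$ yields the claim after absorbing one extra factor of $(1+r/\rho(x_0))$. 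Setting $\mu:=C\,\|f\|_{\mathrm{BMO}_{\rho,\theta}}\big(1+r/\rho(x_0)\big)^{(N_0+1)\theta}$, I then obtain the uniform oscillation bound $\frac{1}{|Q'|}\int_{Q'}|f-f_{Q'}|\,dx\le\mu$ for \emph{every} dyadic $Q'\subseteq\mathcal{Q}$, with $\mu$ independent of $Q'$.

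With this uniform bound, the remainder is the standard iteration. For a dyadic subcube $Q'\subseteq\mathcal{Q}$ I would apply the Calderón--Zygmund decomposition to $f-f_{Q'}$ at height $2^{d}\mu$, producing maximal stopping subcubes $\{Q_j\}$ with $\sum_j|Q_j|\le 2^{-d}|Q'|$, with $|f-f_{Q'}|\le 2^{d}\mu$ a.e.\ off $\bigcup_j Q_j$, and with $|f_{Q_j}-f_{Q'}|\le 2^{2d}\mu$. Writing $A(\lambda):=\sup_{Q'\subseteq\mathcal{Q}}|Q'|^{-1}\big|\{x\in Q':|f-f_{Q'}|>\lambda\}\big|$ (supremum over dyadic subcubes of $\mathcal{Q}$), the containment of the superlevel set inside $\bigcup_j Q_j$ together with the bound on $|f_{Q_j}-f_{Q'}|$ yields the recursion $A(\lambda)\le 2^{-d}A(\lambda-2^{2d}\mu)$ for $\lambda>2^{d}\mu$; iterating this and using $A\le1$ produces the exponential decay $A(\lambda)\le C_1\exp(-C_2\lambda/\mu)$. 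Specializing to $Q'=\mathcal{Q}$ and substituting the value of $\mu$ gives precisely the asserted inequality, with the factor $(1+r/\rho(x_0))^{-(N_0+1)\theta}$ appearing inside the exponential.

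The main obstacle is the uniform oscillation bound of the second paragraph: everything downstream is the classical argument run essentially verbatim, but that argument is valid only because the single constant $\mu$ controls every dyadic subcube of $\mathcal{Q}$ at once. It is exactly the slack in the comparison \eqref{com} --- the $N_0$-power loss incurred when relating $\rho(c')$ to $\rho(x_0)$ --- that forces the exponent to jump from $\theta$ to $(N_0+1)\theta$. Keeping careful track of this loss (rather than naively bounding $(1+r'/\rho(c'))^{\theta}$ by $(1+r/\rho(x_0))^{\theta}$, which is false since $\rho$ is not monotone and $\rho(c')$ may be far smaller than $\rho(x_0)$) is the crux of the matter.
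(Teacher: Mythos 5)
Your proposal is correct, but there is nothing in the paper to compare it against line by line: the paper does not prove Lemma \ref{expbmo} at all, it simply quotes it from Tang \cite{tang} (Proposition 4.2 there). Judged on its own merits, your argument is a valid, self-contained proof. The crucial step you isolate --- that for every subcube $Q'=Q(c',r')\subseteq Q(x_0,r)$ one has
\begin{equation*}
\Big(1+\frac{r'}{\rho(c')}\Big)^{\theta}\leq C\Big(1+\frac{r}{\rho(x_0)}\Big)^{(N_0+1)\theta}
\end{equation*}
--- is precisely the paper's estimate \eqref{wangh3} (proved at the start of Section 2 from the left-hand inequality of \eqref{com}), and your derivation of it is the same: bound $\rho(x_0)/\rho(c')\leq C_0\big(1+|c'-x_0|/\rho(x_0)\big)^{N_0}$, then absorb one extra factor of $1+r/\rho(x_0)$, which is where the exponent jumps from $\theta$ to $(N_0+1)\theta$. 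Once that uniform oscillation bound $\mu\approx\|f\|_{\mathrm{BMO}_{\rho,\theta}}\big(1+r/\rho(x_0)\big)^{(N_0+1)\theta}$ is available on all dyadic subcubes of $\mathcal{Q}$, your Calder\'on--Zygmund iteration is the classical John--Nirenberg machine run correctly: stopping at height $2^d\mu$ gives $\sum_j|Q_j|\leq 2^{-d}|Q'|$, the parent-cube trick gives $|f_{Q_j}-f_{Q'}|\leq 2^{2d}\mu$, and the recursion $A(\lambda)\leq 2^{-d}A(\lambda-2^{2d}\mu)$ (with the convention $A(s)=1$ for $s\leq 0$) iterates to $A(\lambda)\leq C_1\exp(-C_2\lambda/\mu)$; taking $Q'=\mathcal{Q}$ gives the statement. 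Two minor points to record in a write-up: the recursion can only be iterated while the running level stays above $2^d\mu$, which affects only the value of $C_1$; and the grid must be the dyadic grid generated by $\mathcal{Q}$ itself, so every stopping cube is again a subcube of $\mathcal{Q}$ and the uniform bound applies to it --- you do set this up correctly. Your proof thus makes explicit the mechanism behind the exponent $(N_0+1)\theta$ that the paper only imports by citation.
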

This estimate plays an important role in the proofs of our main theorems.

In 2014, Liu--Sheng introduced a new class of function spaces which is larger than the classical Lipschitz space. According to \cite{liu}, for $0<\theta<\infty$ and $0\leq\beta<1$, the space $\mathrm{Lip}_{\beta}^{\rho,\theta}(\mathbb R^d)$ is defined to be the set of all locally integrable functions $f$ satisfying
\begin{equation}\label{Lipliu}
\frac{1}{|B(x_0,r)|^{1+\beta/d}}\int_{B(x_0,r)}\big|f(x)-f_{B}\big|\,dx
\leq C\cdot\bigg(1+\frac{r}{\rho(x_0)}\bigg)^{\theta},
\end{equation}
for all $x_0\in\mathbb R^d$ and $r\in(0,\infty)$, $f_{B}$ denotes the mean value of $f$ on $B(x_0,r)$, that is,
\begin{equation*}
f_{B}:=\frac{1}{|B(x_0,r)|}\int_{B(x_0,r)}f(y)\,dy.
\end{equation*}
The infimum of the constants $C$ satisfying \eqref{Lipliu} is defined to be the norm of $f\in \mathrm{Lip}_{\beta}^{\rho,\theta}(\mathbb R^d)$ and denoted by $\|f\|_{\mathrm{Lip}_{\beta}^{\rho,\theta}}$, or equivalently,
\begin{equation*}
\|f\|_{\mathrm{Lip}_{\beta}^{\rho,\theta}}:=\sup_{B(x_0,r)}\bigg(1+\frac{r}{\rho(x_0)}\bigg)^{-\theta}
\bigg(\frac{1}{|B(x_0,r)|^{1+\beta/d}}\int_{B(x_0,r)}\big|f(x)-f_{B}\big|\,dx\bigg),
\end{equation*}
where the supremum is taken over all balls $B(x_0,r)$ with $x_0\in\mathbb R^d$ and $r\in(0,\infty)$.
\begin{remark}
\begin{enumerate}
Some special cases:
  \item Note that if $\theta=0$ in \eqref{Lipliu}, then $\mathrm{Lip}_{\beta}^{\rho,\theta}(\mathbb R^d)$ is exactly the classical Lipschitz space $\mathrm{Lip}_{\beta}(\mathbb R^d)$;
  \item if $\beta=0$ and $0<\theta<\infty$ in \eqref{Lipliu}, then $\mathrm{Lip}_{\beta}^{\rho,\theta}(\mathbb R^d)$ is exactly the above space $\mathrm{BMO}_{\rho,\theta}(\mathbb R^d)$ introduced by Bongioanni--Harboure--Salinas in \cite{bong3}.
\end{enumerate}
\end{remark}

For such spaces, we have the following key estimate, which can be found in \cite[Theorem 5]{liu}.
\begin{lemma}[\cite{liu}]\label{beta}
If $f\in \mathrm{Lip}_{\beta}^{\rho,\theta}(\mathbb R^d)$ with $0<\beta<1$ and $0<\theta<\infty$, then there exists a positive constant $C>0$ such that
\begin{equation*}
\frac{|f(x)-f(y)|}{|x-y|^{\beta}}\leq C\|f\|_{\mathrm{Lip}_{\beta}^{\rho,\theta}}
\bigg(1+\frac{|x-y|}{\rho(x)}+\frac{|x-y|}{\rho(y)}\bigg)^{\theta}
\end{equation*}
holds true for all $x,y\in\mathbb R^d$ with $x\neq y$. Conversely, if there is a positive constant $C>0$ such that for any $x,y\in\mathbb R^d$ with $x\neq y$,
\begin{equation*}
\frac{|f(x)-f(y)|}{|x-y|^{\beta}}\leq C\bigg(1+\frac{|x-y|}{\rho(x)}+\frac{|x-y|}{\rho(y)}\bigg)^{\theta}
\end{equation*}
holds for some $\theta>0$ and $0<\beta<1$, then $f\in \mathrm{Lip}_{\beta}^{\rho,(N_0+1)\theta}(\mathbb R^d)$.
\end{lemma}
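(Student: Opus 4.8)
The plan is to prove the two implications separately. The forward implication (integral condition $\Rightarrow$ pointwise Hölder bound) is the substantial one, and rests on the auxiliary estimate that for every Lebesgue point $x$ of $f$ and every $r>0$,
\[
|f(x) - f_{B(x,r)}| \le C\,\|f\|_{\mathrm{Lip}_\beta^{\rho,\theta}}\,r^\beta\Big(1 + \frac{r}{\rho(x)}\Big)^\theta.
\]
I would obtain this by a dyadic telescoping argument: with $B_k = B(x,2^{-k}r)$ one has $f(x) = \lim_{k\to\infty} f_{B_k}$ by Lebesgue differentiation, and each increment is controlled by enlarging the smaller average, $|f_{B_{k+1}} - f_{B_k}| \le 2^d\,|B_k|^{-1}\int_{B_k}|f - f_{B_k}|$, after which the defining inequality \eqref{Lipliu} and the monotonicity $(1 + 2^{-k}r/\rho(x))^\theta \le (1 + r/\rho(x))^\theta$ apply. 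Summing in $k$ is where $\beta>0$ is used decisively: $\sum_k (2^{-k}r)^\beta$ converges geometrically. This is precisely the feature that fails for $\beta=0$, where one must appeal instead to the John--Nirenberg inequality of Lemma~\ref{expbmo}.

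Given $x\neq y$, I would then set $t=|x-y|$ and interpose ball averages. Taking $B=B(x,2t)$, $B'=B(y,2t)$ and the common enlargement $\widetilde B = B(x,3t)\supset B\cup B'$, the triangle inequality decomposes $|f(x)-f(y)|$ into the centered terms $|f(x)-f_B|$ and $|f(y)-f_{B'}|$, each bounded by the auxiliary estimate, together with the comparison $|f_B - f_{B'}|$, which I would route through $f_{\widetilde B}$ using that $|\widetilde B|/|B|$ and $|\widetilde B|/|B'|$ are dimensional constants. Collecting the terms gives $|f(x)-f(y)| \le C\,\|f\|_{\mathrm{Lip}_\beta^{\rho,\theta}}\,t^\beta\big[(1+t/\rho(x))^\theta + (1+t/\rho(y))^\theta\big]$, and the elementary inequality $(1+a)^\theta + (1+b)^\theta \le 2(1+a+b)^\theta$ puts this in the stated form. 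The resulting representative of $f$ is continuous, so the bound holds for all $x,y$.

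For the converse, assume the pointwise Hölder bound and fix a ball $B=B(x_0,r)$. Starting from $|f(x)-f_B| \le |B|^{-1}\int_B|f(x)-f(y)|\,dy$ and averaging once more in $x$, the problem reduces to estimating $|f(x)-f(y)|$ for $x,y\in B$, where $|x-y|<2r$. The only genuine step is to replace $\rho(x)$ and $\rho(y)$ by $\rho(x_0)$: the lower bound in \eqref{com} yields $\rho(y)\ge C_0^{-1}\rho(x_0)\big(1+r/\rho(x_0)\big)^{-N_0}$ for $y\in B$, hence $2r/\rho(y) \le C\big(1+r/\rho(x_0)\big)^{N_0+1}$, and symmetrically for $x$. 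Therefore $\big(1 + |x-y|/\rho(x) + |x-y|/\rho(y)\big)^\theta \le C\big(1+r/\rho(x_0)\big)^{(N_0+1)\theta}$, giving $|f(x)-f(y)| \le C\,r^\beta\big(1+r/\rho(x_0)\big)^{(N_0+1)\theta}$ uniformly on $B$. Inserting this into the double average and dividing by $|B|^{\beta/d}\approx r^\beta$ produces exactly $f\in\mathrm{Lip}_\beta^{\rho,(N_0+1)\theta}(\mathbb R^d)$.

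The main obstacle I anticipate is the off-center comparison $|f_B - f_{B'}|$ in the forward direction: the auxiliary estimate only measures deviation from averages over balls centered at the base point, so care is needed to route both endpoints through a common ball of comparable radius while keeping each $\rho$-factor attached to its correct center. Once $\beta>0$ is exploited for the geometric summation and Lemma~\ref{N0} for the critical-radius comparison, the remaining computations are routine.
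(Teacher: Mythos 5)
The paper itself contains no proof of this lemma: it is quoted as a known result from Liu--Sheng \cite{liu} (Theorem 5 there), so there is no internal argument to compare against. Your proposal is correct and supplies the standard Campanato-style proof of exactly this statement. The forward direction is right as you set it up: the telescoping over $B_k=B(x,2^{-k}r)$ with the doubling factor $2^d$, the geometric summability of $\sum_k(2^{-k}r)^\beta$ (this is indeed where $\beta>0$ enters and why the $\beta=0$ case needs Lemma \ref{expbmo} instead), and the chaining of $f_{B(x,2t)}$ and $f_{B(y,2t)}$ through the common ball $B(x,3t)$, keeping the $\rho(x)$-factor on the balls centered at $x$ and the $\rho(y)$-factor on the one centered at $y$. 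The converse is also correct, and its key step --- for $x,y\in B(x_0,r)$, replacing $\rho(x)$ and $\rho(y)$ by $\rho(x_0)$ at the cost of the exponent $(N_0+1)\theta$ via the lower bound in \eqref{com} --- is precisely the estimate \eqref{wangh3} that the paper proves at the start of Section 2, so you could simply invoke it rather than re-derive it.

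One small point is worth a line in a full write-up: the telescoping argument yields the pointwise bound only when both $x$ and $y$ are Lebesgue points of $f$. Passing to \emph{all} $x,y$ means working with the continuous representative and taking limits along Lebesgue points; since $\rho$ need not be continuous, in that limit you should compare $\rho(x_n)$ with $\rho(x)$ using \eqref{com}, which costs at most a factor $C_0^{\theta}$ in the constant. This is a presentational gap, not a mathematical one; the proof is sound.
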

Throughout this article, we will always assume that $V\in RH_s(\mathbb R^d)$ for some $s\geq d/2$ and $d\geq3$, the letter $C$ denotes a positive constant which is independent of the main parameters, but it may vary from line to line. Constants with subscripts, such as $C_0,C_1$, do not change in different occurrences. The notation $\mathbf{X}\approx \mathbf{Y}$ means that there exists a positive constant $C>0$ such that $1/C\leq \mathbf{X}/\mathbf{Y}\leq C$.

\section{\textbf{Some lemmas}}
Let us recall and prove some lemmas, before stating and giving the proof of our main theorems. First observe that from \eqref{com}, it is easy to verify that when $x\in B(x_0,r)$ with $x_0\in\mathbb R^d$ and $r>0$,
\begin{equation}\label{wangh3}
1+\frac{r}{\rho(x)}\leq C_0\cdot\bigg(1+\frac{r}{\rho(x_0)}\bigg)^{N_0+1},
\end{equation}
where $C_0$ is the constant appearing in Lemma \ref{N0}. In fact, this estimate has been obtained in the literature (see
\cite[Lemma 1]{bong8} and \cite[Lemma 2]{bong10}), for the sake of completeness, we give its proof here. By the left-hand side of \eqref{com}, we have that for any $x\in B(x_0,r)$,
\begin{equation*}
\frac{1}{\rho(x)}\leq C_0\cdot\frac{1}{\rho(x_0)}\bigg(1+\frac{|x-x_0|}{\rho(x_0)}\bigg)^{N_0}
<C_0\cdot\frac{1}{\rho(x_0)}\bigg(1+\frac{r}{\rho(x_0)}\bigg)^{N_0}.
\end{equation*}
From this, it follows that (since $C_0\geq1$)
\begin{equation*}
1+\frac{r}{\rho(x)}\leq 1+C_0\cdot\frac{r}{\rho(x_0)}\bigg(1+\frac{r}{\rho(x_0)}\bigg)^{N_0}
\leq C_0\cdot\bigg(1+\frac{r}{\rho(x_0)}\bigg)^{N_0+1},
\end{equation*}
as desired. The following results (Lemmas \ref{rh} through \ref{Apq}) are extensions of well-known properties of classical $A_p$ weights. We first present an important property of the $A^{\rho,\theta}_p$ classes of weights with $1\leq p<\infty$ and $0<\theta<\infty$, which was given by Bongioanni--Harboure--Salinas in \cite[Lemma 5]{bong1}.

\begin{lemma}[\cite{bong1}]\label{rh}
If $\omega\in A^{\rho,\theta}_p(\mathbb R^d)$ with $0<\theta<\infty$ and $1\leq p<\infty$, then there exist positive constants $\epsilon>0,\eta>1$ and $C>0$ such that
\begin{equation}\label{rholder}
\bigg(\frac{1}{|\mathcal{Q}|}\int_{\mathcal{Q}}\omega(x)^{1+\epsilon}dx\bigg)^{\frac{1}{1+\epsilon}}
\leq C\cdot\bigg(\frac{1}{|\mathcal{Q}|}\int_{\mathcal{Q}}\omega(x)\,dx\bigg)\bigg(1+\frac{r}{\rho(x_0)}\bigg)^{\eta}
\end{equation}
holds for every cube $\mathcal{Q}=Q(x_0,r)$ in $\mathbb R^d$.
\end{lemma}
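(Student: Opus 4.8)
The plan is to deduce this weighted reverse Hölder inequality from the \emph{classical} one by localizing to the critical scale, where $\omega$ is an ordinary Muckenhoupt $A_p$ weight with a \emph{bounded} characteristic. One cannot simply quote the classical reverse Hölder inequality on $\mathcal{Q}$ itself: the definition of $A^{\rho,\theta}_p$ only bounds the $A_p$ characteristic of $\mathcal{Q}$ by $C(1+r/\rho(x_0))^{\theta}$, which blows up as $r/\rho(x_0)\to\infty$, and since the classical reverse Hölder exponent degrades as the $A_p$ characteristic grows, a direct application would yield an exponent depending on $\mathcal{Q}$, whereas \eqref{rholder} demands a single $\epsilon$ valid for \emph{all} cubes. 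Overcoming this is the crux of the argument.

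\emph{Step 1 (localization).} Call a cube $R=Q(y,\ell)$ \emph{subcritical} if $\ell\le\inf_{z\in R}\rho(z)$. For any subcube $Q(z,s)\subseteq R$ one then has $s\le\ell\le\rho(z)$, so $(1+s/\rho(z))^{\theta}\le 2^{\theta}$ and the defining inequality of $A^{\rho,\theta}_p$ collapses to the classical $A_p$ inequality on $Q(z,s)$ with characteristic at most $2^{\theta}[\omega]_{A^{\rho,\theta}_p}=:\Lambda_0$. Thus $\omega$ is a classical $A_p$ weight uniformly over the subcubes of every subcritical $R$, with characteristic $\le\Lambda_0$. The classical (local) reverse Hölder inequality then furnishes a single pair $\epsilon>0$ and $C_1$, depending only on $d,p,\Lambda_0$, with $\frac{1}{|R|}\int_R\omega^{1+\epsilon}\le C_1\big(\frac{1}{|R|}\int_R\omega\big)^{1+\epsilon}$ for every subcritical $R$. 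This fixes $\epsilon$ once and for all.

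\emph{Step 2 (decomposition and patching).} Given $\mathcal{Q}=Q(x_0,r)$, I would run a dyadic stopping-time decomposition: starting from $\mathcal{Q}$, bisect a descendant $Q'$ as long as $\ell(Q')>\inf_{Q'}\rho$ and stop otherwise, obtaining pairwise disjoint maximal stopped cubes $\{Q_j=Q(y_j,\ell_j)\}$ with $\bigcup_j Q_j=\mathcal{Q}$ (termination is guaranteed since $\rho$ is bounded below by a positive constant on the closure of $\mathcal{Q}$). Each $Q_j$ is subcritical while its parent is not; invoking \eqref{com} to compare $\rho$ at nearby points, together with $\ell_j\le\rho(y_j)$ from the stopping rule, pins the sidelengths to the critical scale, $\ell_j\approx\rho(y_j)$. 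Lemma \ref{N0} applied between $x_0$ and $y_j$ (with $|x_0-y_j|\lesssim r$) then yields a uniform lower bound $\ell_j\ge\ell_{\min}$ with $\ell_{\min}\approx\rho(x_0)(1+r/\rho(x_0))^{-N_0}$. Applying the fixed-exponent reverse Hölder inequality of Step 1 on each $Q_j$ and summing,
\[
\int_{\mathcal{Q}}\omega^{1+\epsilon}=\sum_j\int_{Q_j}\omega^{1+\epsilon}\le C_1\sum_j|Q_j|^{-\epsilon}\,\omega(Q_j)^{1+\epsilon}\le C_1\,\ell_{\min}^{-\epsilon d}\sum_j\omega(Q_j)^{1+\epsilon}.
\]
Using the elementary superadditivity $\sum_j\omega(Q_j)^{1+\epsilon}\le\big(\sum_j\omega(Q_j)\big)^{1+\epsilon}=\omega(\mathcal{Q})^{1+\epsilon}$ and the bound $\ell_{\min}^{-\epsilon d}\le C\,r^{-\epsilon d}(1+r/\rho(x_0))^{(N_0+1)\epsilon d}$, one divides by $|\mathcal{Q}|=r^d$ to reach
\[
\frac{1}{|\mathcal{Q}|}\int_{\mathcal{Q}}\omega^{1+\epsilon}\le C\Big(1+\frac{r}{\rho(x_0)}\Big)^{(N_0+1)\epsilon d}\Big(\frac{1}{|\mathcal{Q}|}\int_{\mathcal{Q}}\omega\Big)^{1+\epsilon},
\]
and raising to the power $1/(1+\epsilon)$ gives \eqref{rholder} with $\eta=(N_0+1)\epsilon d/(1+\epsilon)$, enlarged if necessary so that $\eta>1$, and a cube-independent constant.

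The main obstacle is exactly the tension flagged above: producing a reverse Hölder exponent $\epsilon$ independent of $\mathcal{Q}$. Everything hinges on the critical-scale stopping-time decomposition, whose two quantitative outputs are both extracted from Lemma \ref{N0}: the subcriticality $\ell_j\le\inf_{Q_j}\rho$ gives a uniform classical $A_p$ characteristic, hence the uniform $\epsilon$, while the matching lower bound $\ell_j\approx\rho(y_j)$ controls $\ell_{\min}$ and thereby the admissible power of $1+r/\rho(x_0)$. Verifying that this decomposition terminates and that its stopped cubes genuinely sit at the critical scale is the technical heart; the summation afterwards is routine, and the case $r\le\rho(x_0)$ (where $\mathcal{Q}$ is itself essentially subcritical) is an immediate instance of Step 1.
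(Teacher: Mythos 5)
Your proposal is correct, but note first that the paper itself contains no proof of Lemma \ref{rh}: it is quoted from Bongioanni--Harboure--Salinas \cite[Lemma 5]{bong1}, the paper only recording afterwards that $\epsilon$ comes from the classical reverse H\"older argument of \cite[Theorem 7.4]{duoand} and that the cited proof produces the exponent $\eta=\theta p+(\theta+d)\frac{pN_0}{N_0+1}+(N_0+1)\frac{d\epsilon}{1+\epsilon}$. Your route shares the same starting point as the cited argument --- at subcritical scales the $A^{\rho,\theta}_p$ condition collapses to a classical $A_p$ condition with characteristic bounded by $2^{\theta}[\omega]_{A^{\rho,\theta}_p}$, and the Calder\'on--Zygmund proof of the classical reverse H\"older inequality only uses the $A_p$ condition on subcubes, so it localizes and yields one fixed $\epsilon$ --- but the patching step is genuinely different. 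The argument behind the citation covers a large ball by balls of critical radius with bounded overlap and must compare the average of $\omega$ over each small ball with the average over the big one through further applications of the $A^{\rho,\theta}_p$ condition; those comparisons account for the terms $\theta p$ and $(\theta+d)\frac{pN_0}{N_0+1}$ in $\eta$ above. Your stopping-time decomposition instead produces pairwise \emph{disjoint} subcritical cubes, and the unnormalized local estimate $\int_{Q_j}\omega^{1+\epsilon}\leq C|Q_j|^{-\epsilon}\omega(Q_j)^{1+\epsilon}$ combines with the superadditivity bound $\sum_j\omega(Q_j)^{1+\epsilon}\leq\omega(\mathcal{Q})^{1+\epsilon}$ so that no average comparison is ever needed; only the geometric factor $\max_j|Q_j|^{-\epsilon}\leq\ell_{\min}^{-d\epsilon}$ survives, and Lemma \ref{N0} controls it. This gives $\eta=(N_0+1)d\epsilon/(1+\epsilon)$, i.e.\ exactly the third term of the paper's exponent, so (if written out in full) your argument in fact addresses the paper's own question of whether Lemma \ref{rh} can be improved. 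Two small repairs are needed: the uniform lower bound on the stopped side lengths should read $\ell_j\geq\min\big\{r,\,c\,\rho(x_0)(1+r/\rho(x_0))^{-N_0}\big\}$, since the root cube $\mathcal{Q}$ may itself be stopped when $r$ is small (the bound $(r/\ell_j)^{d\epsilon}\leq C(1+r/\rho(x_0))^{(N_0+1)d\epsilon}$ that you actually use is unaffected); and your closing claim that $r\leq\rho(x_0)$ makes $\mathcal{Q}$ itself subcritical is not literally true under your definition, because $\rho$ can dip below $\rho(x_0)$ on $\mathcal{Q}$ by a fixed factor coming from \eqref{com} --- but that case is subsumed by your Step 2 in any event, so nothing is lost.
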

\begin{remark}
The constant $C>0$ in Lemma \ref{rh} depends on $p,d$ and the $A^{\rho,\theta}_{p}$ characteristic constant of $\omega$, the positive number $\epsilon$ in Lemma \ref{rh} comes from the classical proof for $A_p$ weights in \cite[Theorem 7.4]{duoand}, and $\eta$ is a positive constant greater than 1, which can be chosen as follows.
\begin{equation*}
\eta:=\theta p+(\theta+d)\frac{pN_0}{N_0+1}+(N_0+1)\frac{d\epsilon}{1+\epsilon}>1.
\end{equation*}
One is naturally led to ask whether it is possible to improve this result.
\end{remark}
As a direct consequence of Lemma \ref{rh}, we have the following result, which provides us the comparison between the Lebesgue measure of a set $E$ and its weighted measure $\omega(E)$.
\begin{lemma}\label{comparelem}
If $\omega\in A^{\rho,\theta}_p(\mathbb R^d)$ with $0<\theta<\infty$ and $1\leq p<\infty$, then there exist two positive numbers $\delta>0$ and $\eta>1$ such that for any cube $\mathcal{Q}=Q(x_0,r)\subset\mathbb R^d$,
\begin{equation}\label{compare}
\frac{\omega(E)}{\omega(\mathcal{Q})}\leq C\cdot\bigg(\frac{|E|}{|\mathcal{Q}|}\bigg)^\delta\bigg(1+\frac{r}{\rho(x_0)}\bigg)^{\eta}
\end{equation}
holds for any measurable subset $E$ contained in $\mathcal{Q}$, where $C>0$ is a constant which does not depend on $E$ nor on $\mathcal{Q}$, and $\eta$ is given as in Lemma \ref{rh}.
\end{lemma}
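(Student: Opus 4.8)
The plan is to obtain \eqref{compare} as an immediate consequence of the reverse H\"older inequality \eqref{rholder} of Lemma \ref{rh} combined with H\"older's inequality, so that the exponent $\delta$ is produced by the self-improving exponent $\epsilon$ of Lemma \ref{rh} while the factor $\big(1+r/\rho(x_0)\big)^{\eta}$ is inherited verbatim. This is the standard passage from a (scale-dependent) reverse H\"older inequality to an $A_\infty$-type comparison between Lebesgue and weighted measure.

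First I would fix a cube $\mathcal{Q}=Q(x_0,r)\subset\mathbb R^d$ and a measurable set $E\subset\mathcal{Q}$, and estimate $\omega(E)=\int_{\mathcal{Q}}\omega(x)\chi_E(x)\,dx$ by H\"older's inequality with the conjugate pair $1+\epsilon$ and $(1+\epsilon)/\epsilon$, where $\epsilon>0$ is the exponent furnished by Lemma \ref{rh}. This gives
\[
\omega(E)\leq\bigg(\int_{\mathcal{Q}}\omega(x)^{1+\epsilon}\,dx\bigg)^{\frac{1}{1+\epsilon}}\,|E|^{\frac{\epsilon}{1+\epsilon}}.
\]
Next I would insert the reverse H\"older inequality \eqref{rholder}, writing the first factor as $|\mathcal{Q}|^{1/(1+\epsilon)}$ times the normalized $L^{1+\epsilon}$ average of $\omega$ over $\mathcal{Q}$, so as to bound it by $C\,|\mathcal{Q}|^{1/(1+\epsilon)}\big(\tfrac{1}{|\mathcal{Q}|}\int_{\mathcal{Q}}\omega\big)\big(1+r/\rho(x_0)\big)^{\eta}=C\,|\mathcal{Q}|^{1/(1+\epsilon)-1}\,\omega(\mathcal{Q})\,\big(1+r/\rho(x_0)\big)^{\eta}$.

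Substituting this into the previous estimate and dividing through by $\omega(\mathcal{Q})$, the powers of $|\mathcal{Q}|$ and $|E|$ combine as $|\mathcal{Q}|^{1/(1+\epsilon)-1}|E|^{\epsilon/(1+\epsilon)}=\big(|E|/|\mathcal{Q}|\big)^{\epsilon/(1+\epsilon)}$, which yields \eqref{compare} with $\delta:=\epsilon/(1+\epsilon)\in(0,1)$ and with the same $\eta$ as in Lemma \ref{rh}. I do not expect a genuine obstacle here, since the whole argument is a one-line application of H\"older plus Lemma \ref{rh}; the only points requiring mild care are the bookkeeping of the exponents of $|\mathcal{Q}|$ and the verification that the $\rho$-factor $\big(1+r/\rho(x_0)\big)^{\eta}$ enters exactly once, so that $\eta$ is preserved rather than enlarged.
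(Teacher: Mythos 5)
Your proposal is correct and follows essentially the same route as the paper's proof: H\"older's inequality with conjugate exponents $1+\epsilon$ and $(1+\epsilon)/\epsilon$ applied to $\omega\chi_E$, followed by the reverse H\"older inequality \eqref{rholder}, yielding $\delta=\epsilon/(1+\epsilon)$ and the same $\eta$ as in Lemma \ref{rh}. The exponent bookkeeping ($|\mathcal{Q}|^{1/(1+\epsilon)-1}|E|^{\epsilon/(1+\epsilon)}=(|E|/|\mathcal{Q}|)^{\epsilon/(1+\epsilon)}$) matches the paper exactly.
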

\begin{proof}
For any given cube $\mathcal{Q}=Q(x_0,r)$ with $x_0\in\mathbb R^d$ and $r\in(0,\infty)$, suppose that $E\subset \mathcal{Q}$, then by H\"older's inequality with exponent $1+\epsilon$ and \eqref{rholder}, we can deduce that
\begin{equation*}
\begin{split}
\omega(E)&=\int_{\mathcal{Q}}\chi_E(x)\cdot \omega(x)\,dx\\
&\leq\bigg(\int_{\mathcal{Q}} \omega(x)^{1+\epsilon}dx\bigg)^{\frac{1}{1+\epsilon}}
\bigg(\int_{\mathcal{Q}}\chi_E(x)^{\frac{1+\epsilon}{\epsilon}}\,dx\bigg)^{\frac{\epsilon}{1+\epsilon}}\\
&\leq C\cdot|\mathcal{Q}|^{\frac{1}{1+\epsilon}}\bigg(\frac{1}{|\mathcal{Q}|}\int_{\mathcal{Q}}\omega(x)\,dx\bigg)
\bigg(1+\frac{r}{\rho(x_0)}\bigg)^{\eta}|E|^{\frac{\epsilon}{1+\epsilon}}\\
&=C\cdot\bigg(\frac{|E|}{|\mathcal{Q}|}\bigg)^{\frac{\epsilon}{1+\epsilon}}\bigg(1+\frac{r}{\rho(x_0)}\bigg)^{\eta}\omega(\mathcal{Q}).
\end{split}
\end{equation*}
This gives \eqref{compare} with $\delta=\epsilon/{(1+\epsilon)}$. Here the characteristic function of the set $E$ is denoted by $\chi_E$.
\end{proof}

The following result gives the relationship between these two classes of weights, $A^{\rho,\infty}_p(\mathbb R^d)$ and $A^{\rho,\infty}_{p,q}(\mathbb R^d)$.
\begin{lemma}\label{Apq}
Suppose that $1\leq p<q<\infty$. Then the following statements are true.
\begin{enumerate}
  \item If $p>1$ and $0<\theta<\infty$, then $\omega\in A^{\rho,\theta}_{p,q}(\mathbb R^d)$ implies that $\omega^q\in A^{\rho,\widetilde{\theta}}_t(\mathbb R^d)$ with
\begin{equation*}
t:=1+q/{p'}\quad and \quad \widetilde{\theta}:=\theta\cdot\frac{1}{1/q+1/{p'}}.
\end{equation*}
  \item If $p=1$ and $0<\theta<\infty$, then $\omega\in A^{\rho,\theta}_{1,q}(\mathbb R^d)$ implies that $\omega^q\in A^{\rho,\theta^{\ast}}_1(\mathbb R^d)$ with
  \begin{equation*}
  \theta^{\ast}:=\theta\cdot q.
  \end{equation*}
\end{enumerate}
\end{lemma}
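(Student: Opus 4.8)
The plan is to derive each implication by a purely pointwise manipulation of the defining inequalities, reducing both statements to the classical fact that $A_{p,q}$ weights are precisely the $q$-th powers of $A_t$ weights. No geometric estimate (such as Lemma \ref{N0}) is needed here: since we never change the center of the ball, the factor $(1+r/\rho(x_0))^{\theta}$ behaves exactly like a constant and is simply exponentiated along with everything else.

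For part (1), I would set $\sigma:=\omega^q$ and $t:=1+q/{p'}$, and begin with the elementary algebra that fixes the exponents. From $t=(p'+q)/{p'}$ one gets $t-1=q/{p'}$ and $t'=t/(t-1)=(p'+q)/q$, whence
\begin{equation*}
\frac{t'}{t}=\frac{p'}{q}.
\end{equation*}
This is the decisive bookkeeping identity: it yields $\sigma^{-t'/t}=\omega^{-q\cdot p'/q}=\omega^{-p'}$, so the weight $\omega^{-p'}$ occurring in the $A^{\rho,\theta}_{p,q}$ condition is exactly the one required to test $\sigma$ against the $A^{\rho,\widetilde\theta}_t$ condition. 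Next I would raise the defining inequality of $A^{\rho,\theta}_{p,q}$ to the power $\alpha:=\big(1/q+1/{p'}\big)^{-1}=p'q/(p'+q)$ and check the two compatibilities $\alpha/q=1/t$ and $\alpha/{p'}=1/{t'}$ (both reduce to $\alpha=p'q/(p'+q)$). The left-hand side then becomes exactly
\begin{equation*}
\Big(\tfrac1{|B|}\int_B \sigma\,dx\Big)^{1/t}\Big(\tfrac1{|B|}\int_B \sigma^{-t'/t}\,dx\Big)^{1/{t'}},
\end{equation*}
while the right-hand side becomes $C^{\alpha}(1+r/\rho(x_0))^{\alpha\theta}$; since $\alpha\theta=\theta/(1/q+1/{p'})=\widetilde\theta$, this is precisely the asserted $A^{\rho,\widetilde\theta}_t$ estimate for $\sigma=\omega^q$.

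For part (2) the argument is even shorter: I would simply raise the defining inequality of $A^{\rho,\theta}_{1,q}$ to the $q$-th power, turning the left-hand side into $\tfrac1{|B|}\int_B\omega^q\,dx$ and the right-hand side into $C^q(1+r/\rho(x_0))^{\theta q}\big(\underset{x\in B}{\mbox{ess\,inf}}\;\omega(x)\big)^q$. The only point to verify is that the power and the essential infimum commute, i.e. $\big(\underset{x\in B}{\mbox{ess\,inf}}\;\omega(x)\big)^q=\underset{x\in B}{\mbox{ess\,inf}}\;\omega(x)^q$, which holds because $t\mapsto t^q$ is strictly increasing on $[0,\infty)$ and $\omega\ge0$. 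This delivers the $A^{\rho,\theta^{\ast}}_1$ condition for $\omega^q$ with $\theta^{\ast}=\theta q$.

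There is no serious analytical obstacle; the entire difficulty is the exponent bookkeeping. The step I would double-check most carefully is the identity $t'/t=p'/q$ together with the compatibility $\alpha/q=1/t$, $\alpha/{p'}=1/{t'}$, since a single slip there would misidentify $t$, $\widetilde\theta$, or the conjugate weight; and in part (2), the routine but essential commutation of the $q$-th power with the essential infimum.
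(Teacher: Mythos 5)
Your proof is correct and follows essentially the same route as the paper's: the identical exponent bookkeeping ($t'/t = 1/(t-1) = p'/q$, hence $(\omega^q)^{-t'/t}=\omega^{-p'}$), raising the $A^{\rho,\theta}_{p,q}$ inequality to the power $\big(1/q+1/p'\big)^{-1}$ for part (1), and raising the $A^{\rho,\theta}_{1,q}$ inequality to the $q$-th power with the ess\,inf commuting past the increasing map $t\mapsto t^q$ for part (2). No discrepancies to report.
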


\begin{proof}
(1) When $t=1+q/{p'}$, then a simple computation shows that
\begin{equation*}
\frac{\,1\,}{t}=\frac{\,1\,}{q}\cdot\frac{1}{1/q+1/{p'}},\qquad \frac{\,1\,}{t'}=\frac{t-1}{t}=\frac{\,1\,}{p'}\cdot\frac{1}{1/q+1/{p'}},
\end{equation*}
and
\begin{equation*}
q\cdot\Big(-\frac{t'}{\,t\,}\Big)=-q\cdot\frac{1}{t-1}=-p'.
\end{equation*}
If $\omega\in A^{\rho,\theta}_{p,q}(\mathbb R^d)$ with $1<p<q<\infty$ and $0<\theta<\infty$, then we have
\begin{equation*}
\begin{split}
&\bigg(\frac{1}{|B|}\int_B \omega^q(x)\,dx\bigg)^{1/t}\bigg(\frac{1}{|B|}\int_B \omega^q(x)^{-{t'}/t}\,dx\bigg)^{1/{t'}}\\
&=\bigg[\bigg(\frac{1}{|B|}\int_B \omega(x)^q\,dx\bigg)^{1/q}\bigg(\frac{1}{|B|}\int_B \omega(x)^{-{p'}}\,dx\bigg)^{1/{p'}}\bigg]^{\frac{1}{1/q+1/{p'}}}\\
&\leq\Big([\omega]_{A^{\rho,\theta}_{p,q}}\Big)^{\frac{1}{1/q+1/{p'}}}
\cdot\bigg(1+\frac{r}{\rho(x_0)}\bigg)^{\theta\cdot\frac{1}{1/q+1/{p'}}}
=\Big([\omega]_{A^{\rho,\theta}_{p,q}}\Big)^{\frac{1}{1/q+1/{p'}}}\cdot\bigg(1+\frac{r}{\rho(x_0)}\bigg)^{\widetilde{\theta}},
\end{split}
\end{equation*}
which means that $\omega^q\in A^{\rho,\widetilde{\theta}}_{t}(\mathbb R^d)$ with $\widetilde{\theta}=\theta\cdot\frac{1}{1/q+1/{p'}}$, and
\begin{equation*}
[\omega^q]_{A^{\rho,\widetilde{\theta}}_{t}}\leq\Big([\omega]_{A^{\rho,\theta}_{p,q}}\Big)^{\frac{1}{1/q+1/{p'}}}
=\Big([\omega]_{A^{\rho,\theta}_{p,q}}\Big)^{{\widetilde{\theta}}/{\theta}}.
\end{equation*}
Here and in the sequel, for any positive number $\gamma>0$, we denote $\omega^{\gamma}(x)=\omega(x)^{\gamma}$ by convention.

(2) On the other hand, if $\omega\in A^{\rho,\theta}_{1,q}(\mathbb R^d)$ with $1<q<\infty$ and $0<\theta<\infty$, then we have
\begin{equation*}
\begin{split}
\frac1{|B|}\int_B \omega^q(x)\,dx
&\leq \Big([\omega]_{A^{\rho,\theta}_{1,q}}\Big)^q\cdot\bigg(1+\frac{r}{\rho(x_0)}\bigg)^{\theta\cdot q}\Big(\underset{x\in B}{\mbox{ess\,inf}}\;\omega(x)\Big)^q\\
&=\Big([\omega]_{A^{\rho,\theta}_{1,q}}\Big)^q
\cdot\bigg(1+\frac{r}{\rho(x_0)}\bigg)^{\theta^{*}}\underset{x\in B}{\mbox{ess\,inf}}\;\omega^q(x),
\end{split}
\end{equation*}
which means that $\omega^q\in A^{\rho,\theta^{*}}_{1}(\mathbb R^d)$ with $\theta^{*}=\theta\cdot q$, and
\begin{equation*}
[\omega^q]_{A^{\rho,\theta^{*}}_{1}}\leq\Big([\omega]_{A^{\rho,\theta}_{1,q}}\Big)^q
=\Big([\omega]_{A^{\rho,\theta}_{1,q}}\Big)^{{\theta^{*}}/{\theta}}.
\end{equation*}
This concludes the proof of Lemma \ref{Apq}.
\end{proof}
There are many classical works about the characterizations of usual BMO and Lipschitz spaces. We now present some relevant results in the literature. In 1961, John and Nirenberg established the following result (known as the John--Nirenberg inequality, see \cite{john} and \cite{duoand}):
If $f\in \mathrm{BMO}(\mathbb R^d)$, then for any cube $Q$ in $\mathbb R^d$ and for any $\lambda>0$,
\begin{equation*}
\Big|\Big\{x\in Q:|f(x)-f_{Q}|>\lambda\Big\}\Big|
\leq c_1|Q|\exp\bigg\{-\frac{c_2\lambda}{\|f\|_{\mathrm{BMO}}}\bigg\},
\end{equation*}
where $c_1>0$ and $c_2>0$ are two universal constants and
\begin{equation*}
\|f\|_{\mathrm{BMO}}:=\sup_{Q\subset\mathbb R^d}\frac{1}{|Q|}\int_{Q}|f(x)-f_{Q}|\,dx<\infty.
\end{equation*}
As a consequence of the above estimate and H\"{o}lder's inequality, we can obtain an equivalent norm on $\mathrm{BMO}(\mathbb R^d)$, see \cite[Corollary 6.12]{duoand}, for example.
\begin{proposition}[\cite{duoand}]
For $1\leq s<\infty$, define
\begin{equation*}
\|f\|_{\mathrm{BMO}^s}:=\sup_{Q\subset\mathbb R^d}\bigg(\frac{1}{|Q|}\int_{Q}|f(x)-f_{Q}|^s\,dx\bigg)^{1/s}.
\end{equation*}
Then we have~(when $s=1$, we write $\|\cdot\|_{\mathrm{BMO}^s}=\|\cdot\|_{\mathrm{BMO}}$)
\begin{equation*}
\|f\|_{\mathrm{BMO}^s}\approx\|f\|_{\mathrm{BMO}},
\end{equation*}
for each $1<s<\infty$ (the norms are mutually equivalent).
\end{proposition}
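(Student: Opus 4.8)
The plan is to prove the two inequalities $\|f\|_{\mathrm{BMO}}\leq\|f\|_{\mathrm{BMO}^s}$ and $\|f\|_{\mathrm{BMO}^s}\leq C\|f\|_{\mathrm{BMO}}$ separately, the first being elementary and the second relying on the John--Nirenberg inequality quoted just above the statement. Combining the two gives the asserted equivalence of norms.

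First I would dispose of the easy direction. For fixed $1<s<\infty$ and any cube $Q$, H\"older's inequality (equivalently, Jensen's inequality applied to the convex function $t\mapsto t^s$ against the normalized measure $dx/|Q|$ on $Q$) yields
\[
\frac{1}{|Q|}\int_Q|f(x)-f_Q|\,dx\leq\bigg(\frac{1}{|Q|}\int_Q|f(x)-f_Q|^s\,dx\bigg)^{1/s}.
\]
Taking the supremum over all cubes $Q$ gives $\|f\|_{\mathrm{BMO}}\leq\|f\|_{\mathrm{BMO}^s}$ at once, and in particular shows that $f\in\mathrm{BMO}(\mathbb R^d)$ whenever the right-hand norm is finite.

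For the reverse inequality I would use the layer-cake (distribution function) representation together with the John--Nirenberg estimate. Fix a cube $Q$, abbreviate $A:=\|f\|_{\mathrm{BMO}}$, and write $E_\lambda:=\{x\in Q:|f(x)-f_Q|>\lambda\}$, so that
\[
\frac{1}{|Q|}\int_Q|f(x)-f_Q|^s\,dx=\frac{s}{|Q|}\int_0^\infty\lambda^{s-1}|E_\lambda|\,d\lambda.
\]
Applying $|E_\lambda|\leq c_1|Q|\exp(-c_2\lambda/A)$ and cancelling the factor $|Q|$, the right-hand side is at most $s c_1\int_0^\infty\lambda^{s-1}\exp(-c_2\lambda/A)\,d\lambda$. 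The substitution $u=c_2\lambda/A$ turns this into $s c_1 (A/c_2)^s\int_0^\infty u^{s-1}e^{-u}\,du=s c_1\Gamma(s)(A/c_2)^{s}$, where $\Gamma$ is the Gamma function. Since $s\Gamma(s)=\Gamma(s+1)$, I obtain
\[
\frac{1}{|Q|}\int_Q|f(x)-f_Q|^s\,dx\leq\frac{c_1\,\Gamma(s+1)}{c_2^{\,s}}\,\|f\|_{\mathrm{BMO}}^{\,s},
\]
and taking the $s$-th root followed by the supremum over $Q$ yields $\|f\|_{\mathrm{BMO}^s}\leq C_s\|f\|_{\mathrm{BMO}}$ with $C_s:=(c_1\Gamma(s+1))^{1/s}/c_2$.

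There is no serious obstacle here; the argument is entirely standard. The only points requiring mild care are that the John--Nirenberg constants $c_1,c_2$ are universal (independent of $Q$), so the resulting bound is uniform in $Q$, and that the $\lambda$-integral converges: the integrand $\lambda^{s-1}$ is integrable at the origin because $s>0$, while the exponential factor forces convergence at infinity, producing the finite value $\Gamma(s)$. Putting the two inequalities together establishes $\|f\|_{\mathrm{BMO}^s}\approx\|f\|_{\mathrm{BMO}}$ for every $1<s<\infty$.
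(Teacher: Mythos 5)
Your proposal is correct and follows exactly the route the paper indicates for this result: the trivial direction via H\"older's (Jensen's) inequality, and the reverse direction via the layer-cake formula combined with the John--Nirenberg inequality, yielding the constant $(c_1\Gamma(s+1))^{1/s}/c_2$. This is the same argument the paper invokes (and reuses in the proofs of its main theorems, e.g.\ the distribution-function computation in Theorem 3.1), so there is nothing to add.
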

We can extend this result to the weighted case. For each $\omega\in A_{\infty}:=\cup_{1\leq p<\infty}A_p$, we denote by $\mathrm{BMO}({\omega})$ the set of all locally integrable functions $f$ on $\mathbb R^d$ such that
\begin{equation*}
\|f\|_{\mathrm{BMO}(\omega)}:=\sup_{Q\subset\mathbb R^d}\frac{1}{\omega(Q)}\int_{Q}|f(x)-f_{\omega,Q}|\omega(x)\,dx<\infty,
\end{equation*}
where
\begin{equation*}
f_{\omega,Q}:=\frac{1}{\omega(Q)}\int_{Q}f(x)\omega(x)\,dx.
\end{equation*}
In 1976, Muckenhoupt and Wheeden proved that a function $f$ is in the space $\mathrm{BMO}(\mathbb R^d)$ if and only if $f$ is in $\mathrm{BMO}({\omega})$ (bounded mean oscillation with respect to $\omega$), provided that $\omega\in A_{\infty}(\mathbb R^d)$, see \cite[Theorem 5]{muck2}.
\begin{proposition}[\cite{muck2}]
For each $\omega\in A_{\infty}(\mathbb R^d)$, then we have $\mathrm{BMO}(\mathbb R^d)=\mathrm{BMO}({\omega})$ and (the norms are mutually equivalent)
\begin{equation*}
\|f\|_{\mathrm{BMO}(\omega)}\approx\|f\|_{\mathrm{BMO}}.
\end{equation*}
\end{proposition}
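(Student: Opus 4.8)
The plan is to establish the norm equivalence $\|f\|_{\mathrm{BMO}(\omega)}\approx\|f\|_{\mathrm{BMO}}$ by proving the two inequalities separately; as sets, the identity $\mathrm{BMO}(\mathbb R^d)=\mathrm{BMO}(\omega)$ then follows at once. In both directions I would first reduce the mean oscillation to a distributional integral and then transfer between Lebesgue measure and the weighted measure $\omega\,dx$ using the comparability supplied by the $A_\infty$ hypothesis. The only elementary preliminary is that for \emph{any} constant $c$ one has $\frac{1}{\omega(Q)}\int_Q|f-f_{\omega,Q}|\omega\,dx\le\frac{2}{\omega(Q)}\int_Q|f-c|\omega\,dx$ and, symmetrically, $\frac{1}{|Q|}\int_Q|f-f_Q|\,dx\le\frac{2}{|Q|}\int_Q|f-c|\,dx$; this lets me replace each average by whichever one is convenient, at the cost of a harmless factor $2$.

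For $\|f\|_{\mathrm{BMO}(\omega)}\le C\|f\|_{\mathrm{BMO}}$, fix a cube $Q$, choose $c=f_Q$, and write the weighted oscillation by the layer-cake formula as $\int_Q|f-f_Q|\omega\,dx=\int_0^\infty\omega(E_\lambda)\,d\lambda$, where $E_\lambda=\{x\in Q:|f(x)-f_Q|>\lambda\}$. Since $\omega\in A_\infty$, the classical analogue of Lemma \ref{comparelem} gives $\omega(E_\lambda)/\omega(Q)\le C(|E_\lambda|/|Q|)^{\delta}$ for some $\delta>0$, while the classical John--Nirenberg inequality recalled above gives $|E_\lambda|\le c_1|Q|\exp\{-c_2\lambda/\|f\|_{\mathrm{BMO}}\}$. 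Substituting the second estimate into the first and integrating the resulting exponential in $\lambda$ yields $\frac{1}{\omega(Q)}\int_Q|f-f_Q|\omega\,dx\le C\|f\|_{\mathrm{BMO}}$, with $C$ depending only on $d$ and the $A_\infty$ data of $\omega$; taking the supremum over all cubes finishes this direction.

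The reverse inequality $\|f\|_{\mathrm{BMO}}\le C\|f\|_{\mathrm{BMO}(\omega)}$ is harder and carries the main difficulty. Here I would first record the \emph{reverse} comparison: writing $\omega\in A_p$ for some finite $p$ (possible since $A_\infty=\cup_{1\le p<\infty}A_p$) and applying H\"older's inequality together with the $A_p$ condition gives, for every measurable $E\subset Q$, the bound $|E|/|Q|\le C(\omega(E)/\omega(Q))^{1/p}$, so that small weighted measure forces small Lebesgue measure. The second and decisive ingredient is a \emph{weighted} John--Nirenberg inequality, namely $\omega(\{x\in Q:|f-f_{\omega,Q}|>\lambda\})\le C_1\omega(Q)\exp\{-C_2\lambda/\|f\|_{\mathrm{BMO}(\omega)}\}$ for $f\in\mathrm{BMO}(\omega)$. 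Granting this, I take $c=f_{\omega,Q}$, expand $\int_Q|f-f_{\omega,Q}|\,dx=\int_0^\infty|\{x\in Q:|f-f_{\omega,Q}|>\lambda\}|\,d\lambda$, apply the reverse comparison and then the weighted estimate, and integrate the exponential to obtain $\frac{1}{|Q|}\int_Q|f-f_Q|\,dx\le C\|f\|_{\mathrm{BMO}(\omega)}$.

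Thus the crux is the weighted John--Nirenberg inequality, which I would prove by repeating the classical stopping-time (Calder\'on--Zygmund) argument with $d\mu=\omega\,dx$ in place of Lebesgue measure. This substitution is legitimate precisely because $\omega\in A_\infty$ makes $\mu$ a doubling measure: the Calder\'on--Zygmund subdivision at successive heights, the geometric decay of the $\mu$-measure of the bad sets, and hence the exponential bound all carry over. The points demanding care are that the selected subcubes control the increments $f_{\omega,Q'}-f_{\omega,Q}$ uniformly (which uses doubling of $\mu$) and that the iteration constants stay bounded away from degeneracy; both are routine once doubling is available. Combining the two inequalities gives $\|f\|_{\mathrm{BMO}(\omega)}\approx\|f\|_{\mathrm{BMO}}$, completing the proof.
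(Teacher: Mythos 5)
Your proposal is correct, but the paper itself never proves this proposition --- it is imported verbatim from Muckenhoupt--Wheeden \cite{muck2}, so the only internal benchmark is the proof of its Schr\"odinger-setting analogue, Theorem \ref{main1}, and it is instructive to see where your argument does and does not coincide with it. Your first inequality, $\|f\|_{\mathrm{BMO}(\omega)}\leq C\|f\|_{\mathrm{BMO}}$, is exactly the paper's method for part (1) of Theorem \ref{main1}: layer-cake decomposition, the John--Nirenberg bound, and the $A_\infty$ comparison $\omega(E)/\omega(Q)\leq C\left(|E|/|Q|\right)^{\delta}$, which is the classical case of Lemma \ref{comparelem}. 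The converse is where the routes genuinely part ways, and for a structural reason: the paper's converse, part (2) of Theorem \ref{main1}, is a one-line H\"older argument with $\omega(x)^{1/p}\cdot\omega(x)^{-1/p}$ feeding straight into the $A_p$ condition, but that shortcut is available only because there the weighted oscillation is measured around the \emph{unweighted} mean $f_Q$ and in $L^p(\omega)$. In the Muckenhoupt--Wheeden formulation, $\mathrm{BMO}(\omega)$ controls only the $L^1(\omega)$ oscillation around the \emph{weighted} mean $f_{\omega,Q}$, so H\"older alone leaves you needing an $L^p(\omega)$ bound that the hypothesis does not supply; you correctly diagnosed that a self-improvement --- a John--Nirenberg inequality for the doubling measure $\omega\,dx$ --- is then unavoidable on this route (indeed, Chebyshev combined with your reverse comparison $|E|/|Q|\leq C\left(\omega(E)/\omega(Q)\right)^{1/p}$ only yields a $\lambda^{-1/p}$ tail, which is not integrable, so exponential decay is genuinely needed). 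That weighted John--Nirenberg inequality is a classical fact for doubling measures, and your Calder\'on--Zygmund sketch, including the two delicate points you flag, is the standard proof; in a written version it should be either carried out in full or attributed, since it is the entire content of the hard direction. What your route buys is a self-contained proof of the equivalence for the $f_{\omega,Q}$-based norm over all of $A_\infty$; what the paper's formulation buys is that the converse becomes trivial, at the price of stating its characterizations with oscillations around $f_Q$ rather than $f_{\omega,Q}$.
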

In 2011, Ho further proved the following result by using the John--Nirenberg inequality and relevant properties of $A_{\infty}$ weights, see \cite[Theorem 3.1]{ho}.
\begin{proposition}[\cite{ho}]
For all $1\leq s<\infty$ and $\omega\in A_{s}(\mathbb R^d)$, then $f\in \mathrm{BMO}(\mathbb R^d)$ if and only if
\begin{equation*}
\sup_{Q\subset\mathbb R^d}\bigg(\frac{1}{\omega(Q)}\int_{Q}|f(x)-f_{Q}|^s\omega(x)\,dx\bigg)^{1/s}<\infty.
\end{equation*}
\end{proposition}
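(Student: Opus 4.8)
The plan is to prove the equivalence by treating the two implications separately and, in fact, to establish the stronger quantitative statement that the quantity
\[
\|f\|_{s,\omega}:=\sup_{Q\subset\mathbb R^d}\bigg(\frac{1}{\omega(Q)}\int_{Q}|f(x)-f_{Q}|^{s}\omega(x)\,dx\bigg)^{1/s}
\]
is comparable to $\|f\|_{\mathrm{BMO}}$, with constants depending only on $s$, $d$ and the $A_s$ constant of $\omega$. The two ingredients are the classical John--Nirenberg inequality recalled above and the self-improving properties of Muckenhoupt weights; note that since $\omega\in A_s\subset A_\infty$, the classical analogue of Lemma \ref{comparelem} holds, namely there are constants $C,\delta>0$ with $\omega(E)/\omega(Q)\leq C(|E|/|Q|)^{\delta}$ for every cube $Q$ and every measurable $E\subset Q$.

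First I would prove that $f\in\mathrm{BMO}(\mathbb R^d)$ implies $\|f\|_{s,\omega}<\infty$. Fixing a cube $Q$ and expanding the integral by the layer-cake formula,
\[
\int_{Q}|f(x)-f_{Q}|^{s}\omega(x)\,dx=s\int_{0}^{\infty}\lambda^{s-1}\,\omega\big(\{x\in Q:|f(x)-f_{Q}|>\lambda\}\big)\,d\lambda,
\]
I would bound the weighted measure of each level set by combining the $A_\infty$ comparison with the John--Nirenberg estimate: setting $E_\lambda=\{x\in Q:|f(x)-f_{Q}|>\lambda\}$ one has
\[
\omega(E_\lambda)\leq C\,\omega(Q)\bigg(\frac{|E_\lambda|}{|Q|}\bigg)^{\delta}\leq C\,\omega(Q)\,\big(c_1 e^{-c_2\lambda/\|f\|_{\mathrm{BMO}}}\big)^{\delta}.
\]
Dividing by $\omega(Q)$ and integrating the resulting exponential against $\lambda^{s-1}$ produces a convergent Gamma integral, which yields $\|f\|_{s,\omega}\leq C\|f\|_{\mathrm{BMO}}$ uniformly in $s\geq1$.

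For the converse I would recover the unweighted mean oscillation from the weighted one. When $s>1$, insert $\omega^{1/s}\omega^{-1/s}$ and apply H\"older's inequality with exponents $s$ and $s'$:
\[
\frac{1}{|Q|}\int_{Q}|f-f_{Q}|\,dx\leq\frac{1}{|Q|}\bigg(\int_{Q}|f-f_{Q}|^{s}\omega\bigg)^{1/s}\bigg(\int_{Q}\omega^{-1/(s-1)}\bigg)^{(s-1)/s}.
\]
Writing the first factor as $\omega(Q)^{1/s}$ times the normalized weighted average and regrouping the powers of $|Q|$, the geometric mean $\big(\frac{1}{|Q|}\int_Q\omega\big)^{1/s}\big(\frac{1}{|Q|}\int_Q\omega^{-1/(s-1)}\big)^{(s-1)/s}$ appears, which is exactly $[\omega]_{A_s}^{1/s}$ by the definition of $A_s$; hence $\|f\|_{\mathrm{BMO}}\leq[\omega]_{A_s}^{1/s}\|f\|_{s,\omega}$. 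The case $s=1$ is handled separately, using the pointwise lower bound $\omega(x)\geq\operatorname{ess\,inf}_{x\in Q}\omega(x)$ together with the $A_1$ inequality $\frac{1}{|Q|}\int_Q\omega\leq[\omega]_{A_1}\operatorname{ess\,inf}_{x\in Q}\omega(x)$.

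I expect the genuinely substantive step to be the first (``only if'') direction, where the exponential Lebesgue-measure decay furnished by John--Nirenberg must be transferred to weighted measure; the decisive tool is the $A_\infty$ comparison $\omega(E)/\omega(Q)\lesssim(|E|/|Q|)^{\delta}$, which itself rests on the reverse H\"older inequality for Muckenhoupt weights. The converse is comparatively routine, being a direct consequence of H\"older's inequality and the defining $A_s$ condition, and requires only the minor bookkeeping of separating $s=1$ from $s>1$.
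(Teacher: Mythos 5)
Your proof is correct and is essentially the approach the paper itself embodies: the paper states this proposition only as a citation to Ho, without reproducing a proof, but its own proofs of the generalizations (Theorems \ref{main1} and \ref{main2}) follow precisely your scheme --- the John--Nirenberg inequality combined with the measure comparison $\omega(E)/\omega(Q)\leq C(|E|/|Q|)^{\delta}$ (Lemma \ref{comparelem}), a layer-cake formula and a Gamma-integral substitution for the ``only if'' direction, and H\"older's inequality with the defining weight condition (the case $s=1$ handled separately via the essential infimum) for the converse. One small inaccuracy that does not affect the result: your constant contains $\big(s\Gamma(s)\big)^{1/s}\approx s/e$, so the bound $\|f\|_{s,\omega}\leq C\|f\|_{\mathrm{BMO}}$ is not uniform in $s\geq 1$ as you claim, though for each fixed $s$ this is all the proposition requires.
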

On the other hand, it is well known that Lipschitz spaces are useful tools in the regularity theory of PDEs. We have the following characterization
of classical Lipschitz spaces, which can be found in \cite[Lemma 1.5]{pal} and \cite[Theorem 2]{janson}. For more general results, see \cite[Theorem 2.4]{nakai}.
\begin{proposition}[\cite{pal}]
For $1\leq s<\infty$ and $0<\beta<1$, define
\begin{equation*}
\|f\|_{\mathrm{Lip}_{\beta}^s}:=\sup_{B\subset\mathbb R^d}\frac{1}{|B|^{\beta/n}}\bigg(\frac{1}{|B|}\int_{B}|f(x)-f_{B}|^s\,dx\bigg)^{1/s},
\end{equation*}
and
\begin{equation*}
\|f\|_{\Lambda_{\beta}}:=\sup_{x,y\in \mathbb R^d,x\neq y}\frac{|f(x)-f(y)|}{|x-y|^{\beta}}.
\end{equation*}
Then we have (when $s=1$, we denote $\|\cdot\|_{\mathrm{Lip}_{\beta}^s}=\|\cdot\|_{\mathrm{Lip}_{\beta}}$)
\begin{equation*}
\|f\|_{\mathrm{Lip}_{\beta}^s}\approx\|f\|_{\mathrm{Lip}_{\beta}}\approx\|f\|_{\Lambda_{\beta}},
\end{equation*}
for each $1<s<\infty$.
\end{proposition}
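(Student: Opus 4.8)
The plan is to establish the cycle of inequalities
$$\|f\|_{\mathrm{Lip}_{\beta}}\leq\|f\|_{\mathrm{Lip}_{\beta}^s}\lesssim\|f\|_{\Lambda_{\beta}}\lesssim\|f\|_{\mathrm{Lip}_{\beta}},$$
which shows that all three quantities are mutually equivalent, with implicit constants depending only on $d$, $\beta$ and $s$. The first inequality is immediate: since $s\geq1$, Jensen's inequality gives $\frac{1}{|B|}\int_B|f-f_B|\,dx\leq\big(\frac{1}{|B|}\int_B|f-f_B|^s\,dx\big)^{1/s}$ for every ball $B$, whence $\|f\|_{\mathrm{Lip}_{\beta}}=\|f\|_{\mathrm{Lip}_{\beta}^1}\leq\|f\|_{\mathrm{Lip}_{\beta}^s}$. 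Note that all three seminorms annihilate constants, so throughout we identify functions that differ by a constant.

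For the second inequality I would bound the averaged oscillation directly by the pointwise H\"older seminorm. For any ball $B=B(x_0,r)$ and any $x\in B$, writing $f(x)-f_B=\frac{1}{|B|}\int_B\big(f(x)-f(y)\big)\,dy$ and using $|x-y|\leq2r$ yields $|f(x)-f_B|\leq\|f\|_{\Lambda_{\beta}}(2r)^{\beta}$. Raising this to the $s$-th power, averaging over $x\in B$, and recalling that $|B|^{\beta/d}\approx r^{\beta}$ gives $\|f\|_{\mathrm{Lip}_{\beta}^s}\lesssim\|f\|_{\Lambda_{\beta}}$ for every $s\in[1,\infty)$.

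The essential step, and the main obstacle, is the third inequality: recovering the pointwise H\"older estimate from control of the mean oscillation. Here I would use a dyadic chaining (telescoping) argument. Fix $x,y$ with $r=|x-y|$ and set $B=B(x,2r)$, so that both points lie in $B$. For the balls $B_j=B(x,2^{1-j}r)$ with $j\geq0$ and $B_0=B$, one has $B_{j+1}\subset B_j$ and $|B_j|/|B_{j+1}|=2^d$, so
$$|f_{B_{j+1}}-f_{B_j}|\leq\frac{|B_j|}{|B_{j+1}|}\cdot\frac{1}{|B_j|}\int_{B_j}|f-f_{B_j}|\,dx\lesssim|B_j|^{\beta/d}\|f\|_{\mathrm{Lip}_{\beta}}\approx(2^{-j}r)^{\beta}\|f\|_{\mathrm{Lip}_{\beta}}.$$
Since $\beta>0$, the geometric series $\sum_{j\geq0}(2^{-j}r)^{\beta}$ converges, and by the Lebesgue differentiation theorem $f_{B_j}\to f(x)$ at every Lebesgue point; summing the telescoping estimate then yields $|f(x)-f_B|\lesssim r^{\beta}\|f\|_{\mathrm{Lip}_{\beta}}$. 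Running the same argument with balls centered at $y$ (comparing $f_B$ to the average over $B(y,4r)\supset B$, which has comparable measure, and then chaining down to $y$) gives $|f(y)-f_B|\lesssim r^{\beta}\|f\|_{\mathrm{Lip}_{\beta}}$, and the triangle inequality produces $|f(x)-f(y)|\lesssim r^{\beta}\|f\|_{\mathrm{Lip}_{\beta}}$, that is, $\|f\|_{\Lambda_{\beta}}\lesssim\|f\|_{\mathrm{Lip}_{\beta}}$. The delicate points are ensuring the chain of balls covers both endpoints through balls of comparable size and invoking Lebesgue differentiation so that, after modifying $f$ on a null set, the pointwise estimate holds for all $x,y$. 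This closes the cycle and completes the proof.
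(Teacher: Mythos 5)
Your argument is correct and complete, but there is no proof in the paper to compare it against: the authors state this proposition without proof, quoting it from \cite{pal} (Lemma 1.5 there) and \cite{janson}, so your write-up supplies an argument that the paper delegates to the literature. What you give is the classical chaining proof of the Campanato--Meyers embedding, and every step checks out: Jensen's inequality for $\|f\|_{\mathrm{Lip}_{\beta}}\leq\|f\|_{\mathrm{Lip}_{\beta}^{s}}$; averaging the pointwise bound $|f(x)-f_{B}|\leq\|f\|_{\Lambda_{\beta}}(2r)^{\beta}$ to get $\|f\|_{\mathrm{Lip}_{\beta}^{s}}\lesssim\|f\|_{\Lambda_{\beta}}$; and, for the essential third direction, the dyadic telescoping
\begin{equation*}
|f_{B_{j+1}}-f_{B_{j}}|\leq 2^{d}\cdot\frac{1}{|B_{j}|}\int_{B_{j}}|f-f_{B_{j}}|\,dx
\lesssim(2^{-j}r)^{\beta}\,\|f\|_{\mathrm{Lip}_{\beta}},
\end{equation*}
whose sum converges precisely because $\beta>0$ (the argument rightly has no analogue at $\beta=0$, where only John--Nirenberg exponential integrability survives, cf.\ Lemma \ref{expbmo}). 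The two delicate points you flag are indeed the only ones: the inclusion $B(x,2r)\subset B(y,4r)$ with comparable measures so the two chains can be joined, and the Lebesgue differentiation step, which forces the equivalence to be read modulo modification of $f$ on a null set --- otherwise $\|f\|_{\Lambda_{\beta}}$ is not even well defined for an $L^{1}_{\mathrm{loc}}$ equivalence class. Two minor remarks: the exponent $|B|^{\beta/n}$ in the statement is a typo for $|B|^{\beta/d}$, and your hard direction is exactly the $\theta=0$ case of the first assertion of Lemma \ref{beta} (the Liu--Sheng estimate), which is the form of this result that the paper actually uses in the proofs of Theorems \ref{main3} and \ref{main4}; the same chaining strategy, with the factors $(1+r/\rho(\cdot))^{\theta}$ tracked by means of Lemma \ref{N0}, is how that generalization is obtained.
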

We mention that this result leads to a generalization of the classical Sobolev embedding theorem. It is also well known that $\mathrm{Lip}_{1/p-1}(\mathbb R^d)$ is the dual space of Hardy space $H^p(\mathbb R^d)$ when $0<p<1$, and $\mathrm{Lip}_{0}(\mathbb R^d)=\mathrm{BMO}(\mathbb R^d)$ is the dual space of Hardy space $H^1(\mathbb R^d)$.
\begin{remark}
There are some other characterizations of Lipschitz spaces, which have been obtained by several authors. For instance, we can give some new characterizations of Lipschitz spaces via the boundedness of commutators (such as Calder\'{o}n--Zygmund singular integral operators and fractional integrals). For further details, we refer the reader to \cite{deng,duong,pal,shi,shi2} and the references therein.
\end{remark}
It is natural to consider the same problems (characterizations of function spaces) in the Schr\"{o}dinger context. Concerning the BMO and Lipschitz spaces related to Schr\"{o}dinger operators with nonnegative potentials, we can obtain the following conclusions.
\begin{proposition}
Let $0<\theta<\infty$ and $1\leq s<\infty$. If $f\in \mathrm{BMO}_{\rho,\theta}(\mathbb R^d)$, then there exists a positive constant $C>0$ such that
\begin{equation*}
\bigg(\frac{1}{|\mathcal{Q}|}\int_{\mathcal{Q}}|f(x)-f_{\mathcal{Q}}|^s\,dx\bigg)^{1/s}
\leq C\bigg(1+\frac{r}{\rho(x_0)}\bigg)^{(N_0+1)\theta}\|f\|_{\mathrm{BMO}_{\rho,\theta}}
\end{equation*}
holds for every cube $\mathcal{Q}=Q(x_0,r)$ with $x_0\in\mathbb R^d$ and $r>0$, where $N_0$ is the constant appearing in Lemma $\ref{N0}$.
\end{proposition}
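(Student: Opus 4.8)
The plan is to reduce the statement to the John--Nirenberg-type inequality of Lemma~\ref{expbmo} via the distribution-function (layer-cake) formula, treating the trivial case $s=1$ separately. For $s=1$ there is nothing to do beyond the definition of the norm: since $N_0>0$ we have $N_0+1>1$, so by \eqref{cc} the factor $(1+r/\rho(x_0))^{\theta}$ coming straight from $\|f\|_{\mathrm{BMO}_{\rho,\theta}}$ is dominated by $(1+r/\rho(x_0))^{(N_0+1)\theta}$, and the inequality holds with $C=1$. Thus the content lies entirely in the range $1<s<\infty$.

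For $1<s<\infty$, I would fix a cube $\mathcal{Q}=Q(x_0,r)$ and write, by the layer-cake representation,
\begin{equation*}
\int_{\mathcal{Q}}|f(x)-f_{\mathcal{Q}}|^s\,dx
=s\int_0^{\infty}\lambda^{s-1}\Big|\big\{x\in\mathcal{Q}:|f(x)-f_{\mathcal{Q}}|>\lambda\big\}\Big|\,d\lambda.
\end{equation*}
Inserting the bound of Lemma~\ref{expbmo} and abbreviating $\Theta:=(1+r/\rho(x_0))^{(N_0+1)\theta}$, the inner measure is controlled by $C_1|\mathcal{Q}|\exp\{-C_2\Theta^{-1}\lambda/\|f\|_{\mathrm{BMO}_{\rho,\theta}}\}$, so that
\begin{equation*}
\int_{\mathcal{Q}}|f(x)-f_{\mathcal{Q}}|^s\,dx
\leq sC_1|\mathcal{Q}|\int_0^{\infty}\lambda^{s-1}
\exp\bigg\{-\frac{C_2\lambda}{\Theta\,\|f\|_{\mathrm{BMO}_{\rho,\theta}}}\bigg\}\,d\lambda.
\end{equation*}
The substitution $u=C_2\lambda/(\Theta\|f\|_{\mathrm{BMO}_{\rho,\theta}})$ turns the remaining integral into $(\Theta\|f\|_{\mathrm{BMO}_{\rho,\theta}}/C_2)^s\,\Gamma(s)$. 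Dividing by $|\mathcal{Q}|$ and taking $s$-th roots then yields the asserted estimate with $C=(sC_1\Gamma(s))^{1/s}/C_2$, a constant depending only on $s$ and on the universal constants of Lemma~\ref{expbmo}.

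I do not expect a genuine obstacle here; the argument is the weighted-exponent analogue of the classical deduction of $\mathrm{BMO}^s\approx\mathrm{BMO}$ from John--Nirenberg. The one point that must be checked with care is that the $\theta$-dependent factor is handled correctly: the exponent $(N_0+1)\theta$ appearing in the conclusion is forced to match exactly the exponent in Lemma~\ref{expbmo}, since that factor is precisely what converts the decay rate $C_2/\|f\|_{\mathrm{BMO}_{\rho,\theta}}$ of the exponential into $C_2/(\Theta\|f\|_{\mathrm{BMO}_{\rho,\theta}})$ and hence scales the Gamma integral by $\Theta^s$. Keeping $\Theta$ outside the $\lambda$-integration throughout (it does not depend on $\lambda$) is what makes the bookkeeping transparent.
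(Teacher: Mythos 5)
Your proof is correct and takes essentially the paper's own approach: the paper quotes this proposition from Bongioanni--Harboure--Salinas \cite{bong3} without reproving it, but your argument (layer-cake representation, the John--Nirenberg estimate of Lemma \ref{expbmo}, and the Gamma-integral substitution) is precisely the unweighted case $\omega\equiv1$ of the paper's proof of Theorem \ref{main1}(1), see the derivation culminating in \eqref{11}. Your bookkeeping of the factor $\Theta=(1+r/\rho(x_0))^{(N_0+1)\theta}$ and the trivial treatment of $s=1$ via \eqref{cc} are both sound.
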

This result was first proved by Bongioanni--Harboure--Salinas in 2011, see \cite[Proposition 3]{bong3}.
\begin{proposition}
Let $0<\theta<\infty$ and $1\leq s<\infty$. If $f\in \mathrm{Lip}_{\beta}^{\rho,\theta}(\mathbb R^d)$ with $0<\beta<1$, then there exists a positive constant $C>0$ such that
\begin{equation*}
\frac{1}{|{\mathcal{B}}|^{\beta/d}}\bigg(\frac{1}{|\mathcal{B}|}\int_{\mathcal{B}}|f(x)-f_{\mathcal{B}}|^s\,dx\bigg)^{1/s}
\leq C\bigg(1+\frac{r}{\rho(x_0)}\bigg)^{(N_0+1)\theta}\|f\|_{\mathrm{Lip}_{\beta}^{\rho,\theta}}
\end{equation*}
holds for every ball $\mathcal{B}=B(x_0,r)$ with $x_0\in\mathbb R^d$ and $r>0$, where $N_0$ is the constant appearing in Lemma $\ref{N0}$.
\end{proposition}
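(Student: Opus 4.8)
The plan is to extract a uniform pointwise bound on the oscillation $|f(x)-f_{\mathcal{B}}|$ over the ball $\mathcal{B}=B(x_0,r)$ directly from the pointwise H\"older estimate of Lemma \ref{beta}, and then to observe that such an $L^\infty$ bound automatically controls every $L^s$ average. This is precisely the point at which the Lipschitz case is genuinely easier than the preceding BMO proposition: there one cannot expect a pointwise bound and must invoke the John--Nirenberg inequality (Lemma \ref{expbmo}), whereas here Lemma \ref{beta} supplies exactly the pointwise control needed, so the parameter $s$ plays no real role.

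First I would write, for each fixed $x\in\mathcal{B}$,
\[
|f(x)-f_{\mathcal{B}}|\le\frac{1}{|\mathcal{B}|}\int_{\mathcal{B}}|f(x)-f(y)|\,dy,
\]
so that it suffices to bound $|f(x)-f(y)|$ uniformly for $x,y\in\mathcal{B}$. Applying the direct part of Lemma \ref{beta} gives
\[
|f(x)-f(y)|\le C\|f\|_{\mathrm{Lip}_{\beta}^{\rho,\theta}}\,|x-y|^{\beta}\bigg(1+\frac{|x-y|}{\rho(x)}+\frac{|x-y|}{\rho(y)}\bigg)^{\theta}.
\]
Since $x,y\in B(x_0,r)$ forces $|x-y|<2r$, the H\"older factor satisfies $|x-y|^{\beta}\le(2r)^{\beta}\approx|\mathcal{B}|^{\beta/d}$.

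Next I would dispose of the critical-radius factor. Because $|x-y|<2r$ one has $1+|x-y|/\rho(x)\le 2(1+r/\rho(x))$, and then \eqref{wangh3}, applied at both $x$ and $y$ (both lying in $B(x_0,r)$), yields
\[
1+\frac{|x-y|}{\rho(x)}+\frac{|x-y|}{\rho(y)}\le C\bigg(1+\frac{r}{\rho(x_0)}\bigg)^{N_0+1}.
\]
Raising to the power $\theta$ and combining the three estimates produces the uniform bound
\[
|f(x)-f(y)|\le C\|f\|_{\mathrm{Lip}_{\beta}^{\rho,\theta}}\,|\mathcal{B}|^{\beta/d}\bigg(1+\frac{r}{\rho(x_0)}\bigg)^{(N_0+1)\theta}
\]
for all $x,y\in\mathcal{B}$, with $C$ independent of $x,y$ and of $\mathcal{B}$.

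Finally, averaging this over $y\in\mathcal{B}$ gives the same bound for $|f(x)-f_{\mathcal{B}}|$, pointwise in $x\in\mathcal{B}$. As the right-hand side no longer depends on $x$, it dominates the essential supremum of the oscillation on $\mathcal{B}$; taking the $L^s$ average over $x\in\mathcal{B}$ and dividing by $|\mathcal{B}|^{\beta/d}$ then yields the asserted inequality for every $1\le s<\infty$ simultaneously. I do not anticipate a genuine obstacle here: the only steps requiring care are the correct bookkeeping of the exponent $(N_0+1)\theta$, which arises entirely from the single application of \eqref{wangh3}, and the (routine) observation that a uniform pointwise estimate renders the $s$-dependence vacuous.
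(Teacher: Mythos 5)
Your proof is correct and follows essentially the same route the paper takes for this type of estimate (in its proof of the weighted generalization, Theorem \ref{main3}): the pointwise H\"older bound of Lemma \ref{beta} combined with the critical-radius comparison \eqref{wangh3} yields a uniform bound on the oscillation over $\mathcal{B}$, after which the $L^s$ average is trivial. The only cosmetic difference is that the paper compares $f(x)$ with the value $f(x_0)$ at the center of the ball rather than averaging $|f(x)-f(y)|$ over $y\in\mathcal{B}$.
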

This result was first given by Liu--Sheng in 2014, see \cite[Proposition 3]{liu}.

Inspired by these results, it is natural to ask whether similar result remains true in the weighted case. In this paper we give a positive answer to this question. As already mentioned in the introduction, the harmonic analysis arising from the Schr\"{o}dinger operator $\mathcal{L}=-\Delta+V$ is based on the use of a related critical radius function, which was introduced by Shen. In this framework, to show our main results, we rely on a version of John--Nirenberg's inequality for the space $\mathrm{BMO}_{\rho,\theta}(\mathbb R^d)$(see Lemma \ref{expbmo}), a pointwise estimate for the function $f\in \mathrm{Lip}_{\beta}^{\rho,\theta}(\mathbb R^d)$(see Lemma \ref{beta}), and some related properties of classes of weights.

\section{\textbf{Main results}}
Let $N_0$ be the same constant as in Lemma \ref{N0} and let $\eta$ be the same number as in Lemma \ref{comparelem}. Now we are in a position to give the main results of this paper.
\begin{theorem}\label{main1}
Let $1\leq p<\infty$ and $\omega\in A^{\rho,\theta_2}_p(\mathbb R^d)$ with $0<\theta_2<\infty$. Then the following
statements are true.
\begin{enumerate}
\item If $f\in \mathrm{BMO}_{\rho,\theta_1}(\mathbb R^d)$ with $0<\theta_1<\infty$, then for any cube $\mathcal{Q}=Q(x_0,r)\subset\mathbb R^d$,
\begin{equation*}
\bigg(\frac{1}{\omega(\mathcal{Q})}\int_{\mathcal{Q}}|f(x)-f_{\mathcal{Q}}|^p\omega(x)\,dx\bigg)^{1/p}
\leq C\bigg(1+\frac{r}{\rho(x_0)}\bigg)^{(N_0+1)\theta_1+\eta/p}\|f\|_{\mathrm{BMO}_{\rho,\theta_1}}.
\end{equation*}
\item Conversely, if there exists a constant $C>0$ such that for any cube $\mathcal{Q}=Q(x_0,r)\subset\mathbb R^d$,
\begin{equation}\label{assum1}
\bigg(\frac{1}{\omega(\mathcal{Q})}\int_{\mathcal{Q}}|f(x)-f_{\mathcal{Q}}|^p\omega(x)\,dx\bigg)^{1/p}\leq C\bigg(1+\frac{r}{\rho(x_0)}\bigg)^{\theta_1-\theta_2}
\end{equation}
holds for some $\theta_1>0$, then $f\in\mathrm{BMO}_{\rho,\theta_1}(\mathbb R^d)$, and
\begin{equation*}
\|f\|_{\mathrm{BMO}_{\rho,\theta_1}}\leq C[\omega]_{A^{\rho,\theta_2}_p}.
\end{equation*}
\end{enumerate}
\end{theorem}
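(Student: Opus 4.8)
The plan is to prove the two directions separately, using the John--Nirenberg inequality (Lemma \ref{expbmo}) together with the comparison estimate (Lemma \ref{comparelem}) for part (1), and a H\"older/duality argument together with the $A^{\rho,\theta_2}_p$ condition for part (2). For part (1), I would fix a cube $\mathcal{Q}=Q(x_0,r)$ and pass to the distribution function via the layer-cake formula
\begin{equation*}
\int_{\mathcal{Q}}|f(x)-f_{\mathcal{Q}}|^p\,\omega(x)\,dx
=p\int_0^\infty\lambda^{p-1}\,\omega\big(\{x\in\mathcal{Q}:|f(x)-f_{\mathcal{Q}}|>\lambda\}\big)\,d\lambda.
\end{equation*}
The next step is to dominate each weighted level set by its Lebesgue measure through Lemma \ref{comparelem}, which supplies a factor $(|\cdot|/|\mathcal{Q}|)^{\delta}(1+r/\rho(x_0))^{\eta}$, and then to bound the Lebesgue measure of the level set by the exponential from Lemma \ref{expbmo}. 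Since $(e^{-A\lambda})^{\delta}=e^{-\delta A\lambda}$ is still exponentially decaying, the $\lambda$-integral becomes a convergent Gamma integral $\int_0^\infty\lambda^{p-1}e^{-\delta A\lambda}\,d\lambda=\Gamma(p)/(\delta A)^p$ with $A=(1+r/\rho(x_0))^{-(N_0+1)\theta_1}C_2/\|f\|_{\mathrm{BMO}_{\rho,\theta_1}}$ (the degenerate case $\|f\|_{\mathrm{BMO}_{\rho,\theta_1}}=0$ being trivial). Collecting the factor $(1+r/\rho(x_0))^{\eta}$ with $A^{-p}\approx(1+r/\rho(x_0))^{(N_0+1)\theta_1 p}\|f\|_{\mathrm{BMO}_{\rho,\theta_1}}^p$ and dividing by $\omega(\mathcal{Q})$ produces exactly the exponent $(N_0+1)\theta_1+\eta/p$ after taking the $p$-th root.

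For part (2) with $p>1$, the goal is to recover the unweighted mean oscillation; I would insert $\omega^{1/p}\omega^{-1/p}$ and apply H\"older's inequality with exponents $p,p'$:
\begin{equation*}
\frac{1}{|\mathcal{Q}|}\int_{\mathcal{Q}}|f(x)-f_{\mathcal{Q}}|\,dx
\leq\frac{1}{|\mathcal{Q}|}\bigg(\int_{\mathcal{Q}}|f(x)-f_{\mathcal{Q}}|^p\omega(x)\,dx\bigg)^{1/p}
\bigg(\int_{\mathcal{Q}}\omega(x)^{-p'/p}\,dx\bigg)^{1/{p'}}.
\end{equation*}
Applying the hypothesis \eqref{assum1} to the first factor and regrouping so that the product $\big(\tfrac{1}{|\mathcal{Q}|}\int_{\mathcal{Q}}\omega\big)^{1/p}\big(\tfrac{1}{|\mathcal{Q}|}\int_{\mathcal{Q}}\omega^{-p'/p}\big)^{1/{p'}}$ appears, I can invoke the $A^{\rho,\theta_2}_p$ bound to produce a factor $[\omega]_{A^{\rho,\theta_2}_p}(1+r/\rho(x_0))^{\theta_2}$. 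The $\theta_2$ powers then cancel against the $(1+r/\rho(x_0))^{\theta_1-\theta_2}$ coming from \eqref{assum1}, leaving precisely the bound $C[\omega]_{A^{\rho,\theta_2}_p}(1+r/\rho(x_0))^{\theta_1}$ that defines membership in $\mathrm{BMO}_{\rho,\theta_1}(\mathbb R^d)$ with the asserted norm estimate.

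The case $p=1$ must be handled separately, since no dual exponent is available: here I would use $\operatorname*{ess\,inf}_{x\in\mathcal{Q}}\omega(x)\leq\omega(x)$ pointwise to reduce $\int_{\mathcal{Q}}|f-f_{\mathcal{Q}}|\,dx$ to $(\operatorname*{ess\,inf}_{x\in\mathcal{Q}}\omega(x))^{-1}\int_{\mathcal{Q}}|f-f_{\mathcal{Q}}|\omega\,dx$, apply \eqref{assum1} with $p=1$, and then invoke the $A^{\rho,\theta_2}_1$ inequality in the form $\omega(\mathcal{Q})\leq|\mathcal{Q}|[\omega]_{A^{\rho,\theta_2}_1}(1+r/\rho(x_0))^{\theta_2}\operatorname*{ess\,inf}_{x\in\mathcal{Q}}\omega(x)$ to conclude. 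I expect the main obstacle to lie in the power bookkeeping of part (1): one must confirm that the exponential decay from John--Nirenberg genuinely survives the $\delta$-th power coming from Lemma \ref{comparelem}, so that the Gamma integral converges and contributes exactly $(N_0+1)\theta_1$, with Lemma \ref{comparelem} supplying the residual $\eta/p$; in part (2) the only subtlety is the separate $p=1$ treatment.
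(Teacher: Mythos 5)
Your proposal is correct and follows essentially the same route as the paper: part (1) via the layer-cake formula, Lemma \ref{expbmo}, and Lemma \ref{comparelem} (with the same bookkeeping giving the Gamma integral and the exponent $(N_0+1)\theta_1+\eta/p$), and part (2) via the $\omega^{1/p}\omega^{-1/p}$ H\"older insertion for $p>1$ and the $\operatorname{ess\,inf}$ argument with the $A^{\rho,\theta_2}_1$ condition for $p=1$. No gaps; the points you flag as potential obstacles (survival of the exponential decay under the $\delta$-power, and the separate $p=1$ case) are handled in the paper exactly as you describe.
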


\begin{theorem}\label{main2}
Let $1\leq p<q<\infty$ and $\omega\in A^{\rho,\theta_2}_{p,q}(\mathbb R^d)$ with $0<\theta_2<\infty$. Then the following
statements are true.
\begin{enumerate}
\item If $f\in \mathrm{BMO}_{\rho,\theta_1}(\mathbb R^d)$ with $0<\theta_1<\infty$, then for any cube $\mathcal{Q}=Q(x_0,r)\subset\mathbb R^d$,
\begin{equation*}
\bigg(\frac{1}{\omega^q(\mathcal{Q})}\int_{\mathcal{Q}}|f(x)-f_{\mathcal{Q}}|^q\omega(x)^q\,dx\bigg)^{1/q}
\leq C\bigg(1+\frac{r}{\rho(x_0)}\bigg)^{(N_0+1)\theta_1+\eta/q}\|f\|_{\mathrm{BMO}_{\rho,\theta_1}}.
\end{equation*}
\item Conversely, if there exists a constant $C>0$ such that for any cube $\mathcal{Q}=Q(x_0,r)\subset\mathbb R^d$,
\begin{equation}\label{assum2}
\bigg(\frac{1}{\omega^q(\mathcal{Q})}\int_{\mathcal{Q}}|f(x)-f_{\mathcal{Q}}|^q\omega(x)^q\,dx\bigg)^{1/q}\leq C\bigg(1+\frac{r}{\rho(x_0)}\bigg)^{\theta_1-\theta_2}
\end{equation}
holds for some $\theta_1>0$, then $f\in \mathrm{BMO}_{\rho,\theta_1}(\mathbb R^d)$, and
\begin{equation*}
\|f\|_{\mathrm{BMO}_{\rho,\theta_1}}\leq C[\omega]_{A^{\rho,\theta_2}_{p,q}}.
\end{equation*}
\end{enumerate}
\end{theorem}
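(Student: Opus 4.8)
The plan is to reduce part (1) to Theorem \ref{main1} and to prove part (2) by a direct computation; in both steps the passage between the classes $A^{\rho,\theta}_{p,q}$ and $A^{\rho,\theta}_{t}$ is governed by Lemma \ref{Apq}.

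For part (1), I would set $\nu:=\omega^q$, so that the left-hand side to be estimated is
\[
\bigg(\frac{1}{\nu(\mathcal{Q})}\int_{\mathcal{Q}}|f(x)-f_{\mathcal{Q}}|^q\nu(x)\,dx\bigg)^{1/q},
\]
which is precisely the quantity appearing in Theorem \ref{main1}(1) with the exponent $p$ there replaced by $q$ and the weight replaced by $\nu$, since $\nu(\mathcal{Q})=\omega^q(\mathcal{Q})$ and $\nu(x)=\omega(x)^q$. By Lemma \ref{Apq}, one has $\nu\in A^{\rho,\widetilde{\theta}}_{t}(\mathbb R^d)$ with $t=1+q/p'$ when $p>1$, and $\nu\in A^{\rho,\theta^{\ast}}_{1}(\mathbb R^d)$ with $\theta^{\ast}=\theta_2 q$ when $p=1$. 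A one-line computation shows $t\le q$, since $t\le q$ is equivalent to $1/p'\le 1/q'$, which holds because $p<q$. Hence, by the nesting $A^{\rho,\theta}_{p_1}\subset A^{\rho,\theta}_{p_2}$ for $p_1<p_2$ recalled in the introduction, $\nu$ lies in $A^{\rho,\widetilde{\theta}}_{q}(\mathbb R^d)$ (respectively $A^{\rho,\theta^{\ast}}_{q}(\mathbb R^d)$). Applying Theorem \ref{main1}(1) to $\nu$ with exponent $q$ then gives exactly the bound of Theorem \ref{main2}(1), with the factor $(1+r/\rho(x_0))^{(N_0+1)\theta_1+\eta/q}$, where $\eta$ is now the constant of Lemma \ref{comparelem} attached to $\nu$. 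I note that the exponent in the conclusion of Theorem \ref{main1}(1) does not involve the weight's class parameter, so replacing $\theta_2$ by $\widetilde{\theta}$ (or $\theta^{\ast}$) is harmless here.

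For part (2), a reduction to Theorem \ref{main1}(2) is \emph{not} available: Lemma \ref{Apq} produces the larger parameter $\widetilde{\theta}>\theta_2$, so the hypothesis \eqref{assum2}, which carries the exponent $\theta_1-\theta_2$, is weaker than the exponent $\theta_1-\widetilde{\theta}$ that Theorem \ref{main1}(2) would demand. I would therefore argue directly. Inserting $\omega\cdot\omega^{-1}$ and using H\"older's inequality with exponents $q$ and $q'$, I get
\[
\frac{1}{|\mathcal{Q}|}\int_{\mathcal{Q}}|f(x)-f_{\mathcal{Q}}|\,dx
\le\frac{1}{|\mathcal{Q}|}\bigg(\int_{\mathcal{Q}}|f(x)-f_{\mathcal{Q}}|^q\omega(x)^q\,dx\bigg)^{1/q}
\bigg(\int_{\mathcal{Q}}\omega(x)^{-q'}\,dx\bigg)^{1/q'}.
\]
The first factor is controlled by hypothesis \eqref{assum2} after inserting $\omega^q(\mathcal{Q})$, and for the second factor I would use that $p\le q$ forces $q'\le p'$, so the monotonicity of normalized $L^{r}$-norms yields
\[
\bigg(\frac{1}{|\mathcal{Q}|}\int_{\mathcal{Q}}\omega(x)^{-q'}\,dx\bigg)^{1/q'}
\le\bigg(\frac{1}{|\mathcal{Q}|}\int_{\mathcal{Q}}\omega(x)^{-p'}\,dx\bigg)^{1/p'},
\]
which is exactly the dual factor in the $A^{\rho,\theta_2}_{p,q}$ condition. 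Collecting the powers of $|\mathcal{Q}|$ by means of $1/q+1/q'=1$ and invoking the definition of $[\omega]_{A^{\rho,\theta_2}_{p,q}}$ produces a factor $(1+r/\rho(x_0))^{\theta_2}$; this combines with the factor $(1+r/\rho(x_0))^{\theta_1-\theta_2}$ from \eqref{assum2} to leave precisely $(1+r/\rho(x_0))^{\theta_1}$. Taking the supremum over all cubes $\mathcal{Q}$ then gives $f\in\mathrm{BMO}_{\rho,\theta_1}(\mathbb R^d)$ together with $\|f\|_{\mathrm{BMO}_{\rho,\theta_1}}\le C[\omega]_{A^{\rho,\theta_2}_{p,q}}$.

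I expect the main obstacle to be precisely the exponent mismatch noted above: because Lemma \ref{Apq} inflates $\theta_2$ to $\widetilde{\theta}$, the two directions cannot both be obtained from Theorem \ref{main1}, and the converse must be run by hand. Within that direct argument, the one nontrivial step is replacing the H\"older-dual exponent $q'$ by the exponent $p'$ required by the $A_{p,q}$-type condition, which is made possible by the $L^{r}$-norm monotonicity and the hypothesis $p<q$; the remaining manipulations are routine bookkeeping of the $(1+r/\rho(x_0))$ factors.
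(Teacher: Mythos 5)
Your proof is correct, and it rests on the same ingredients as the paper's --- Lemma \ref{Apq}, the John--Nirenberg/layer-cake machinery behind Theorem \ref{main1}, H\"older's inequality, and the definition of $A^{\rho,\theta_2}_{p,q}$ --- but both halves are packaged differently, so a comparison is worth recording. For part (1), the paper does not invoke Theorem \ref{main1} as a black box: it reruns the layer-cake argument with $\omega^q$ in place of $\omega$ and exponent $q$, applying Lemma \ref{comparelem} directly to $\omega^q\in A^{\rho,\widetilde{\theta}}_{t}(\mathbb R^d)$, $t=1+q/{p'}$, which Lemma \ref{Apq} provides. Your reduction needs one extra ingredient the paper's route does not, namely the nesting $A^{\rho,\widetilde{\theta}}_{t}(\mathbb R^d)\subset A^{\rho,\widetilde{\theta}}_{q}(\mathbb R^d)$, valid since $t\leq q$ (your check $1/{p'}\leq 1/{q'}$ is right) and recalled in the introduction from \cite{tang}; in exchange you avoid repeating the proof verbatim. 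The only side effect is that your $\eta$ is the constant of Lemma \ref{comparelem} for $\omega^q$ viewed in $A^{\rho,\widetilde{\theta}}_{q}$ rather than in $A^{\rho,\widetilde{\theta}}_{t}$, which is harmless because the theorem pins down $\eta$ only as the number coming from Lemma \ref{comparelem}. For part (2), your computation is the mirror image of the paper's: you apply H\"older with exponents $(q,q')$ and then upgrade the weight factor from $q'$ to $p'$ by monotonicity of normalized $L^r$-norms, whereas the paper applies H\"older with $(p,p')$ and upgrades the oscillation factor from $p$ to $q$ (its inequality \eqref{above}); the $|\mathcal{Q}|$-power bookkeeping cancels identically in both routes and yields the same bound $C[\omega]_{A^{\rho,\theta_2}_{p,q}}\big(1+r/\rho(x_0)\big)^{\theta_1}$. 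Two minor points: the paper treats $p=1$ as a separate case, where the dual factor is $\big(\mathrm{ess\,inf}_{x\in\mathcal{Q}}\;\omega(x)\big)^{-1}$; your monotonicity step still covers it under the convention $p'=\infty$, but you should say so explicitly. And your diagnosis that part (2) cannot be deduced from Theorem \ref{main1}(2) --- because Lemma \ref{Apq} inflates $\theta_2$ to $\widetilde{\theta}>\theta_2$, so hypothesis \eqref{assum2} is too weak for that theorem --- is exactly right, and is implicitly why the paper also proves the converse by hand.
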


\begin{proof}[Proof of Theorem \ref{main1}]
(1) Let $f\in \mathrm{BMO}_{\rho,\theta_1}(\mathbb R^d)$ with $0<\theta_1<\infty$. According to Lemma \ref{expbmo}, there are two constants $C_1,C_2>0$ such that for any $\lambda>0$,
\begin{equation*}
\Big|\Big\{x\in \mathcal{Q}:|f(x)-f_{\mathcal{Q}}|>\lambda\Big\}\Big|
\leq C_1|\mathcal{Q}|\exp\bigg\{-\bigg(1+\frac{r}{\rho(x_0)}\bigg)^{-(N_0+1)\theta_1}\frac{C_2\lambda}{\|f\|_{\mathrm{BMO}_{\rho,\theta_1}}}\bigg\}.
\end{equation*}
Since $\omega\in A^{\rho,\theta_2}_p(\mathbb R^d)$ with $0<\theta_2<\infty$ and $1\leq p<\infty$, by using Lemma \ref{comparelem}, we get
\begin{equation*}
\begin{split}
\omega\Big(\Big\{x\in \mathcal{Q}:|f(x)-f_{\mathcal{Q}}|>\lambda\Big\}\Big)
&\leq C\cdot C_1^{\delta}\omega(\mathcal{Q})
\exp\bigg\{-\bigg(1+\frac{r}{\rho(x_0)}\bigg)^{-(N_0+1)\theta_1}\frac{C_2\delta\lambda}{\|f\|_{\mathrm{BMO}_{\rho,\theta_1}}}\bigg\}\\
&\times\bigg(1+\frac{r}{\rho(x_0)}\bigg)^{\eta}.
\end{split}
\end{equation*}
Hence, for any cube $\mathcal{Q}\subset\mathbb R^d$,
\begin{align*}
&\bigg(\frac{1}{\omega(\mathcal{Q})}\int_{\mathcal{Q}}|f(x)-f_{\mathcal{Q}}|^p\omega(x)\,dx\bigg)^{1/p}\nonumber\\
&=\bigg(\frac{1}{\omega(\mathcal{Q})}\int_0^{\infty}p\lambda^{p-1}\omega\Big(\Big\{x\in \mathcal{Q}:|f(x)-f_{\mathcal{Q}}|>\lambda\Big\}\Big)\,d\lambda\bigg)^{1/p}\nonumber\\
&\leq\bigg(C\cdot C_1^{\delta}\int_0^{\infty}p\lambda^{p-1}\exp\bigg\{-\bigg(1+\frac{r}{\rho(x_0)}\bigg)^{-(N_0+1)\theta_1}
\frac{C_2\delta\lambda}{\|f\|_{\mathrm{BMO}_{\rho,\theta_1}}}\bigg\}d\lambda\bigg)^{1/p}\nonumber\\
&\times\bigg(1+\frac{r}{\rho(x_0)}\bigg)^{\eta/p}.\nonumber\\
\end{align*}
By making the substitution $\mu=\big(1+\frac{r}{\rho(x_0)}\big)^{-(N_0+1)\theta_1}\frac{C_2\delta\lambda}{\|f\|_{\mathrm{BMO}_{\rho,\theta_1}}}$, we can deduce that
\begin{align}\label{11}
&\bigg(\frac{1}{\omega(\mathcal{Q})}\int_{\mathcal{Q}}|f(x)-f_{\mathcal{Q}}|^p\omega(x)\,dx\bigg)^{1/p}\nonumber\\
&\leq\Big(C\cdot C_1^{\delta}p\Big)^{1/p}
\bigg(1+\frac{r}{\rho(x_0)}\bigg)^{(N_0+1)\theta_1}\frac{\|f\|_{\mathrm{BMO}_{\rho,\theta_1}}}{C_2\delta}
\times\bigg(1+\frac{r}{\rho(x_0)}\bigg)^{\eta/p}\nonumber\\
&\times\bigg(\int_0^{\infty}\mu^{p-1}e^{-\mu}\,d\mu\bigg)^{1/p}\nonumber\\
&\leq C\cdot\frac{C_1^{\delta/p}}{C_2}\bigg(1+\frac{r}{\rho(x_0)}\bigg)^{(N_0+1)\theta_1+\eta/p}\|f\|_{\mathrm{BMO}_{\rho,\theta_1}}.
\end{align}
This gives the desired inequality. Now we prove $(2)$. When $1<p<\infty$, by using H\"older's inequality, the condition $\omega\in A^{\rho,\theta_2}_p(\mathbb R^d)$ and \eqref{assum1}, we obtain
\begin{align}\label{12}
\frac{1}{|\mathcal{Q}|}\int_{\mathcal{Q}}|f(x)-f_{\mathcal{Q}}|\,dx
&=\frac{1}{|\mathcal{Q}|}\int_{\mathcal{Q}}|f(x)-f_{\mathcal{Q}}|\omega(x)^{1/p}\cdot\omega(x)^{-1/p}\,dx\nonumber\\
&\leq\frac{1}{|\mathcal{Q}|}\bigg(\int_{\mathcal{Q}}|f(x)-f_{\mathcal{Q}}|^p\omega(x)\,dx\bigg)^{1/p}
\bigg(\int_{\mathcal{Q}}\omega(x)^{-{p'}/p}\,dx\bigg)^{1/{p'}}\nonumber\\
&\leq C\bigg(1+\frac{r}{\rho(x_0)}\bigg)^{\theta_1-\theta_2}\nonumber\\
&\times\bigg(\frac{1}{|\mathcal{Q}|}\int_{\mathcal{Q}}\omega(x)\,dx\bigg)^{1/p}\bigg(\frac{1}{|\mathcal{Q}|}\int_{\mathcal{Q}}\omega(x)^{-{p'}/p}\,dx\bigg)^{1/{p'}}\nonumber\\
&\leq C[\omega]_{A^{\rho,\theta_2}_p}\bigg(1+\frac{r}{\rho(x_0)}\bigg)^{\theta_1}.
\end{align}
When $p=1$, then it follows immediately from the condition $\omega\in A^{\rho,\theta_2}_1(\mathbb R^d)$ and \eqref{assum1} that
\begin{align}\label{13}
\frac{1}{|\mathcal{Q}|}\int_{\mathcal{Q}}|f(x)-f_{\mathcal{Q}}|\,dx
&=\frac{1}{|\mathcal{Q}|}\int_{\mathcal{Q}}|f(x)-f_{\mathcal{Q}}|\omega(x)\cdot\omega(x)^{-1}\,dx\nonumber\\
&\leq\frac{1}{|\mathcal{Q}|}\bigg(\int_{\mathcal{Q}}|f(x)-f_{\mathcal{Q}}|\omega(x)\,dx\bigg)
\bigg(\underset{x\in \mathcal{Q}}{\mbox{ess\,sup}}\;\omega(x)^{-1}\bigg)\nonumber\\
&\leq C\bigg(1+\frac{r}{\rho(x_0)}\bigg)^{\theta_1-\theta_2}\nonumber\\
&\times\bigg(\frac{1}{|\mathcal{Q}|}\int_{\mathcal{Q}}\omega(x)\,dx\bigg)\bigg(\underset{x\in \mathcal{Q}}{\mbox{ess\,inf}}\;\omega(x)\bigg)^{-1}\nonumber\\
&\leq C[\omega]_{A^{\rho,\theta_2}_1}\bigg(1+\frac{r}{\rho(x_0)}\bigg)^{\theta_1}.
\end{align}
Collecting the above estimates \eqref{12} and \eqref{13}, we conclude the proof of Theorem \ref{main1}.
\end{proof}

\begin{proof}[Proof of Theorem \ref{main2}]
(1) Let $f\in \mathrm{BMO}_{\rho,\theta_1}(\mathbb R^d)$ with $0<\theta_1<\infty$. According to Lemma \ref{expbmo}, there are two constants $C_1,C_2>0$ such that for any $\lambda>0$,
\begin{equation*}
\Big|\Big\{x\in \mathcal{Q}:|f(x)-f_{\mathcal{Q}}|>\lambda\Big\}\Big|
\leq C_1|\mathcal{Q}|\exp\bigg\{-\bigg(1+\frac{r}{\rho(x_0)}\bigg)^{-(N_0+1)\theta_1}\frac{C_2\lambda}{\|f\|_{\mathrm{BMO}_{\rho,\theta_1}}}\bigg\}.
\end{equation*}
Since $\omega\in A^{\rho,\theta_2}_{p,q}(\mathbb R^d)$ with $0<\theta_2<\infty$ and $1\leq p<q<\infty$, by using Lemma \ref{Apq} and Lemma \ref{comparelem}, we have
\begin{equation*}
\begin{split}
\omega^q\Big(\Big\{x\in \mathcal{Q}:|f(x)-f_{\mathcal{Q}}|>\lambda\Big\}\Big)
&\leq C\cdot C_1^{\delta}\omega^q(\mathcal{Q})
\exp\bigg\{-\bigg(1+\frac{r}{\rho(x_0)}\bigg)^{-(N_0+1)\theta_1}\frac{C_2\delta\lambda}{\|f\|_{\mathrm{BMO}_{\rho,\theta_1}}}\bigg\}\\
&\times\bigg(1+\frac{r}{\rho(x_0)}\bigg)^{\eta}.
\end{split}
\end{equation*}
Hence, for any cube $\mathcal{Q}\subset\mathbb R^d$,
\begin{align*}
&\bigg(\frac{1}{\omega^q(\mathcal{Q})}\int_{\mathcal{Q}}|f(x)-f_{\mathcal{Q}}|^q\omega(x)^q\,dx\bigg)^{1/q}\nonumber\\
&=\bigg(\frac{1}{\omega^q(\mathcal{Q})}\int_0^{\infty}q\lambda^{q-1}\omega^q\Big(\Big\{x\in \mathcal{Q}:|f(x)-f_{\mathcal{Q}}|>\lambda\Big\}\Big)\,d\lambda\bigg)^{1/q}\nonumber\\
&\leq\bigg(C\cdot C_1^{\delta}\int_0^{\infty}q\lambda^{q-1}\exp\bigg\{-\bigg(1+\frac{r}{\rho(x_0)}\bigg)^{-(N_0+1)\theta_1}
\frac{C_2\delta\lambda}{\|f\|_{\mathrm{BMO}_{\rho,\theta_1}}}\bigg\}d\lambda\bigg)^{1/q}\nonumber\\
&\times\bigg(1+\frac{r}{\rho(x_0)}\bigg)^{\eta/q}.\nonumber\\
\end{align*}
By making the substitution $\nu=\big(1+\frac{r}{\rho(x_0)}\big)^{-(N_0+1)\theta_1}
\frac{C_2\delta\lambda}{\|f\|_{\mathrm{BMO}_{\rho,\theta_1}}}$, we can see that
\begin{align}\label{21}
&\bigg(\frac{1}{\omega^q(\mathcal{Q})}\int_{\mathcal{Q}}|f(x)-f_{\mathcal{Q}}|^q\omega(x)^q\,dx\bigg)^{1/q}\nonumber\\
&\leq C\cdot\frac{C_1^{\delta/q}}{C_2}\bigg(1+\frac{r}{\rho(x_0)}\bigg)^{(N_0+1)\theta_1+\eta/q}\|f\|_{\mathrm{BMO}_{\rho,\theta_1}}
\times\bigg(\int_0^{\infty}\nu^{q-1}e^{-\nu}\,d\nu\bigg)^{1/q}\nonumber\\
&\leq C\cdot\frac{C_1^{\delta/q}}{C_2}\bigg(1+\frac{r}{\rho(x_0)}\bigg)^{(N_0+1)\theta_1+\eta/q}\|f\|_{\mathrm{BMO}_{\rho,\theta_1}}.
\end{align}
This yields the desired estimate. Let us now turn to the proof of (2). When $1<p<\infty$, it then follows directly from the H\"{o}lder inequality that
\begin{align}\label{22}
\frac{1}{|\mathcal{Q}|}\int_{\mathcal{Q}}|f(x)-f_{\mathcal{Q}}|\,dx
&=\frac{1}{|\mathcal{Q}|}\int_{\mathcal{Q}}|f(x)-f_{\mathcal{Q}}|\omega(x)\cdot\omega(x)^{-1}\,dx\\
&\leq\frac{1}{|\mathcal{Q}|}\bigg(\int_{\mathcal{Q}}|f(x)-f_{\mathcal{Q}}|^p\omega(x)^p\,dx\bigg)^{1/p}
\bigg(\int_{\mathcal{Q}}\omega(x)^{-{p'}}\,dx\bigg)^{1/{p'}}\nonumber.
\end{align}
Moreover, by using the H\"{o}lder inequality again, we can see that when $1\leq p<q$,
\begin{equation}\label{above}
\bigg(\frac{1}{|\mathcal{Q}|}\int_{\mathcal{Q}}|f(x)-f_{\mathcal{Q}}|^p\omega(x)^p\,dx\bigg)^{1/p}
\leq\bigg(\frac{1}{|\mathcal{Q}|}\int_{\mathcal{Q}}|f(x)-f_{\mathcal{Q}}|^q\omega(x)^q\,dx\bigg)^{1/q}.
\end{equation}
Substituting the above inequality into \eqref{22}, we thus obtain
\begin{align}\label{221}
\frac{1}{|\mathcal{Q}|}\int_{\mathcal{Q}}|f(x)-f_{\mathcal{Q}}|\,dx
&\leq \frac{|\mathcal{Q}|^{1/p-1/q}}{|\mathcal{Q}|}\bigg(\int_{\mathcal{Q}}|f(x)-f_{\mathcal{Q}}|^q\omega(x)^q\,dx\bigg)^{1/q}
\bigg(\int_{\mathcal{Q}}\omega(x)^{-{p'}}\,dx\bigg)^{1/{p'}}\nonumber\\
&\leq C\bigg(1+\frac{r}{\rho(x_0)}\bigg)^{\theta_1-\theta_2}\nonumber\\
&\times\bigg(\frac{1}{|\mathcal{Q}|}\int_{\mathcal{Q}}\omega(x)^q\,dx\bigg)^{1/q}
\bigg(\frac{1}{|\mathcal{Q}|}\int_{\mathcal{Q}}\omega(x)^{-{p'}}\,dx\bigg)^{1/{p'}}\nonumber\\
&\leq C[\omega]_{A^{\rho,\theta_2}_{p,q}}\bigg(1+\frac{r}{\rho(x_0)}\bigg)^{\theta_1},
\end{align}
where in the last two inequalities we have used \eqref{assum2} and the definition of $A^{\rho,\theta_2}_{p,q}$, respectively. When $p=1$ and $1<q<\infty$, then we have
\begin{align*}
\frac{1}{|\mathcal{Q}|}\int_{\mathcal{Q}}|f(x)-f_{\mathcal{Q}}|\,dx
&=\frac{1}{|\mathcal{Q}|}\int_{\mathcal{Q}}|f(x)-f_{\mathcal{Q}}|\omega(x)\cdot\omega(x)^{-1}\,dx\nonumber\\
&\leq\frac{1}{|\mathcal{Q}|}\bigg(\int_{\mathcal{Q}}|f(x)-f_{\mathcal{Q}}|\omega(x)\,dx\bigg)
\bigg(\underset{x\in \mathcal{Q}}{\mbox{ess\,sup}}\;\omega(x)^{-1}\bigg).\nonumber\\
\end{align*}
From the previous estimate \eqref{above}(with $p=1$), it actually follows that
\begin{align}\label{222}
\frac{1}{|\mathcal{Q}|}\int_{\mathcal{Q}}|f(x)-f_{\mathcal{Q}}|\,dx
&\leq \frac{|\mathcal{Q}|^{1-1/q}}{|\mathcal{Q}|}\bigg(\int_{\mathcal{Q}}|f(x)-f_{\mathcal{Q}}|^q\omega(x)^q\,dx\bigg)^{1/q}
\bigg(\underset{x\in \mathcal{Q}}{\mbox{ess\,sup}}\;\omega(x)^{-1}\bigg)\nonumber\\
&\leq C\bigg(1+\frac{r}{\rho(x_0)}\bigg)^{\theta_1-\theta_2}\nonumber\\
&\times\bigg(\frac{1}{|\mathcal{Q}|}\int_{\mathcal{Q}}\omega(x)^q\,dx\bigg)^{1/q}
\bigg(\underset{x\in \mathcal{Q}}{\mbox{ess\,inf}}\;\omega(x)\bigg)^{-1}\nonumber\\
&\leq C[\omega]_{A^{\rho,\theta_2}_{1,q}}\bigg(1+\frac{r}{\rho(x_0)}\bigg)^{\theta_1},
\end{align}
where in the last two inequalities we have used \eqref{assum2} and the definition of $A^{\rho,\theta_2}_{1,q}$, respectively. Collecting the above estimates \eqref{221} and \eqref{222}, we finish the proof of Theorem \ref{main2}.
\end{proof}
For any cube $\mathcal{Q}$ (or ball $\mathcal{B}$) in $\mathbb R^d$, by using H\"{o}lder's inequality ($1<p<\infty$), we have
\begin{equation*}
|\mathcal{Q}|=\int_{\mathcal{Q}}\omega(x)\cdot\omega(x)^{-1}\,dx
\leq\bigg(\int_{\mathcal{Q}}\omega(x)^p\,dx\bigg)^{1/p}\bigg(\int_{\mathcal{Q}}\omega(x)^{-p'}\,dx\bigg)^{1/{p'}}.
\end{equation*}
By the definition of $A^{\rho,\theta_2}_{p,q}$ weights, we get
\begin{equation*}
\begin{split}
\bigg(\int_{\mathcal{Q}}\omega(x)^q\,dx\bigg)^{1/q}
\bigg(\int_{\mathcal{Q}}\omega(x)^{-{p'}}\,dx\bigg)^{1/{p'}}
&\leq [\omega]_{A^{\rho,\theta_2}_{p,q}}|\mathcal{Q}|^{1/q+1/{p'}}\bigg(1+\frac{r}{\rho(x_0)}\bigg)^{\theta_2}.
\end{split}
\end{equation*}
Consequently,
\begin{equation}\label{wangh1}
\begin{split}
\bigg(\int_{\mathcal{Q}}\omega(x)^q\,dx\bigg)^{1/q}
&\leq[\omega]_{A^{\rho,\theta_2}_{p,q}}\frac{|\mathcal{Q}|^{1/q+1/{p'}}}{|\mathcal{Q}|}\bigg(1+\frac{r}{\rho(x_0)}\bigg)^{\theta_2}
\bigg(\int_{\mathcal{Q}}\omega(x)^p\,dx\bigg)^{1/p}\\
&=[\omega]_{A^{\rho,\theta_2}_{p,q}}\bigg(1+\frac{r}{\rho(x_0)}\bigg)^{\theta_2}
|\mathcal{Q}|^{1/q-1/p}\bigg(\int_{\mathcal{Q}}\omega(x)^p\,dx\bigg)^{1/p}.
\end{split}
\end{equation}
We remark that the above estimate also holds for the case $p=1$ and $\omega\in A^{\rho,\theta_2}_{1,q}(\mathbb R^d)$. Indeed, it is immediate that by definition
\begin{equation*}
\begin{split}
\bigg(\int_{\mathcal{Q}}\omega(x)^q\,dx\bigg)^{1/q}
&\leq [\omega]_{A^{\rho,\theta_2}_{1,q}}\bigg(1+\frac{r}{\rho(x_0)}\bigg)^{\theta_2}
|\mathcal{Q}|^{1/q}\underset{x\in \mathcal{Q}}{\mbox{ess\,inf}}\;\omega(x)\\
&\leq [\omega]_{A^{\rho,\theta_2}_{1,q}}\bigg(1+\frac{r}{\rho(x_0)}\bigg)^{\theta_2}
|\mathcal{Q}|^{1/q-1}\bigg(\int_{\mathcal{Q}}\omega(x)\,dx\bigg).
\end{split}
\end{equation*}
On the other hand, it follows directly from H\"{o}lder's inequality ($1\leq p<q$) that
\begin{equation*}
\bigg(\frac{1}{|\mathcal{Q}|}\int_{\mathcal{Q}}\omega(x)^p\,dx\bigg)^{1/p}
\leq\bigg(\frac{1}{|\mathcal{Q}|}\int_{\mathcal{Q}}\omega(x)^q\,dx\bigg)^{1/q},
\end{equation*}
which implies that
\begin{equation}\label{wangh2}
\bigg(\int_{\mathcal{Q}}\omega(x)^q\,dx\bigg)^{1/q}\geq|\mathcal{Q}|^{1/q-1/p}\bigg(\int_{\mathcal{Q}}\omega(x)^p\,dx\bigg)^{1/p}.
\end{equation}
As a consequence of \eqref{wangh1} and \eqref{wangh2}, we then obtain the following conclusions.
\begin{corollary}\label{cor1}
Let $1\leq p<q<\infty$ and $\omega\in A^{\rho,\theta_2}_{p,q}(\mathbb R^d)$ with $0<\theta_2<\infty$. Then the following
statements are true.
\begin{enumerate}
\item If $f\in \mathrm{BMO}_{\rho,\theta_1}(\mathbb R^d)$ with $0<\theta_1<\infty$, then for any cube $\mathcal{Q}=Q(x_0,r)\subset\mathbb R^d$,
\begin{equation*}
\frac{|\mathcal{Q}|^{1/p-1/q}}{[\omega^p(\mathcal{Q})]^{1/p}}\bigg(\int_{\mathcal{Q}}|f(x)-f_{\mathcal{Q}}|^q\omega(x)^q\,dx\bigg)^{1/q}
\leq C\bigg(1+\frac{r}{\rho(x_0)}\bigg)^{(N_0+1)\theta_1+\theta_2+\eta/p}\|f\|_{\mathrm{BMO}_{\rho,\theta_1}}.
\end{equation*}
\item Conversely, if there exists a constant $C>0$ such that for any cube $\mathcal{Q}=Q(x_0,r)\subset\mathbb R^d$,
\begin{equation*}
\frac{|\mathcal{Q}|^{1/p-1/q}}{[\omega^p(\mathcal{Q})]^{1/p}}\bigg(\int_{\mathcal{Q}}|f(x)-f_{\mathcal{Q}}|^q\omega(x)^q\,dx\bigg)^{1/q}\leq C\bigg(1+\frac{r}{\rho(x_0)}\bigg)^{\theta_1-\theta_2}
\end{equation*}
holds for some $\theta_1>0$, then $f\in \mathrm{BMO}_{\rho,\theta_1}(\mathbb R^d)$, and
\begin{equation*}
\|f\|_{\mathrm{BMO}_{\rho,\theta_1}}\leq C[\omega]_{A^{\rho,\theta_2}_{p,q}}.
\end{equation*}
\end{enumerate}
\end{corollary}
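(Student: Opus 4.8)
The plan is to deduce Corollary \ref{cor1} directly from Theorem \ref{main2}, using the two-sided comparison between $[\omega^q(\mathcal{Q})]^{1/q}$ and $|\mathcal{Q}|^{1/q-1/p}[\omega^p(\mathcal{Q})]^{1/p}$ recorded in \eqref{wangh1} and \eqref{wangh2}. The key observation is that the only difference between the quantities appearing in Theorem \ref{main2} and in the corollary is the normalizing factor in front of $\big(\int_{\mathcal{Q}}|f-f_{\mathcal{Q}}|^q\omega^q\big)^{1/q}$: in the theorem one divides by $[\omega^q(\mathcal{Q})]^{1/q}$, whereas in the corollary one divides by $|\mathcal{Q}|^{1/q-1/p}[\omega^p(\mathcal{Q})]^{1/p}$. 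Thus the whole proof reduces to comparing these two normalizations, in the two opposite directions furnished by \eqref{wangh1} and \eqref{wangh2}.

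For part $(1)$, I would first write the corollary's left-hand side as the product of the left-hand side appearing in Theorem \ref{main2}$(1)$ with the ratio
\[
R(\mathcal{Q}):=\frac{|\mathcal{Q}|^{1/p-1/q}[\omega^q(\mathcal{Q})]^{1/q}}{[\omega^p(\mathcal{Q})]^{1/p}}.
\]
The upper bound \eqref{wangh1} gives at once $R(\mathcal{Q})\leq[\omega]_{A^{\rho,\theta_2}_{p,q}}\big(1+\frac{r}{\rho(x_0)}\big)^{\theta_2}$, since the powers of $|\mathcal{Q}|$ and of $[\omega^p(\mathcal{Q})]^{1/p}$ cancel exactly. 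Applying Theorem \ref{main2}$(1)$ to estimate the remaining factor then yields a bound with exponent $(N_0+1)\theta_1+\theta_2+\eta/q$ and with $[\omega]_{A^{\rho,\theta_2}_{p,q}}$ absorbed into the generic constant $C$. Finally, since $p<q$ forces $\eta/q\leq\eta/p$ and $\big(1+\frac{r}{\rho(x_0)}\big)\geq1$, I can enlarge the exponent to $(N_0+1)\theta_1+\theta_2+\eta/p$, which is exactly the claimed form.

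For part $(2)$, I would verify that the corollary's hypothesis forces hypothesis \eqref{assum2} of Theorem \ref{main2}$(2)$. Writing the left-hand side of \eqref{assum2} as the prefactor $\frac{|\mathcal{Q}|^{1/q-1/p}[\omega^p(\mathcal{Q})]^{1/p}}{[\omega^q(\mathcal{Q})]^{1/q}}$ times the corollary's left-hand side, the lower bound \eqref{wangh2} shows that this prefactor is $\leq1$. Hence the left-hand side of \eqref{assum2} is dominated by the corollary's left-hand side, which by assumption is $\leq C\big(1+\frac{r}{\rho(x_0)}\big)^{\theta_1-\theta_2}$; this is precisely \eqref{assum2}. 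Invoking Theorem \ref{main2}$(2)$ then gives $f\in\mathrm{BMO}_{\rho,\theta_1}(\mathbb R^d)$ together with the estimate $\|f\|_{\mathrm{BMO}_{\rho,\theta_1}}\leq C[\omega]_{A^{\rho,\theta_2}_{p,q}}$.

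The argument is essentially bookkeeping, as all the substantive work (the John--Nirenberg inequality, the weight comparison, and the passage through $\omega^q$) is already contained in Lemma \ref{expbmo}, Lemma \ref{comparelem}, Lemma \ref{Apq} and Theorem \ref{main2}. The only point requiring care is the exponent of the factor $\big(1+\frac{r}{\rho(x_0)}\big)$: one must track how the extra $\theta_2$ produced by \eqref{wangh1} combines with the $(N_0+1)\theta_1+\eta/q$ coming from Theorem \ref{main2}, and then use monotonicity of the exponent together with $\eta/q\leq\eta/p$ to reach the stated form. I do not anticipate any genuine analytic obstacle; the single comparison $|\mathcal{Q}|^{1/q-1/p}[\omega^p(\mathcal{Q})]^{1/p}\leq[\omega^q(\mathcal{Q})]^{1/q}\leq[\omega]_{A^{\rho,\theta_2}_{p,q}}\big(1+\frac{r}{\rho(x_0)}\big)^{\theta_2}|\mathcal{Q}|^{1/q-1/p}[\omega^p(\mathcal{Q})]^{1/p}$, read in its two directions, drives both implications.
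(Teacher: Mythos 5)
Your proposal is correct and follows exactly the paper's intended route: the paper derives Corollary \ref{cor1} by combining Theorem \ref{main2} with the two-sided comparison \eqref{wangh1}--\eqref{wangh2}, which is precisely your bookkeeping argument (including the harmless enlargement of the exponent from $\eta/q$ to $\eta/p$ in part (1) and the domination of the $\omega^q$-normalized average by the corollary's hypothesis via \eqref{wangh2} in part (2)). In fact, you have supplied the details that the paper leaves implicit in the phrase ``as a consequence of \eqref{wangh1} and \eqref{wangh2}.''
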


\begin{theorem}\label{main3}
Let $1\leq p<\infty$ and $\omega\in A^{\rho,\theta_2}_p(\mathbb R^d)$ with $0<\theta_2<\infty$. Then the following
statements are true.
\begin{enumerate}
\item If $f\in \mathrm{Lip}_{\beta}^{\rho,\theta_1}(\mathbb R^d)$ with $0<\beta<1$ and $0<\theta_1<\infty$, then for any ball $\mathcal{B}=B(x_0,r)\subset\mathbb R^d$,
\begin{equation*}
\frac{1}{|\mathcal{B}|^{\beta/d}}\bigg(\frac{1}{\omega(\mathcal{B})}\int_{\mathcal{B}}|f(x)-f_{\mathcal{B}}|^p\omega(x)\,dx\bigg)^{1/p}
\leq C\bigg(1+\frac{r}{\rho(x_0)}\bigg)^{(N_0+1)\theta_1}\|f\|_{\mathrm{Lip}_{\beta}^{\rho,\theta_1}}.
\end{equation*}
\item Conversely, if there exists a constant $C>0$ such that for any ball $\mathcal{B}=B(x_0,r)\subset\mathbb R^d$ and $0<\beta<1$,
\begin{equation*}
\frac{1}{|\mathcal{B}|^{\beta/d}}\bigg(\frac{1}{\omega(\mathcal{B})}\int_{\mathcal{B}}|f(x)-f_{\mathcal{B}}|^p\omega(x)\,dx\bigg)^{1/p}\leq C\bigg(1+\frac{r}{\rho(x_0)}\bigg)^{\theta_1-\theta_2}
\end{equation*}
holds for some $\theta_1>0$, then $f\in \mathrm{Lip}_{\beta}^{\rho,\theta_1}(\mathbb R^d)$, and
\begin{equation*}
\|f\|_{\mathrm{Lip}_{\beta}^{\rho,\theta_1}}\leq C[\omega]_{A^{\rho,\theta_2}_p}.
\end{equation*}
\end{enumerate}
\end{theorem}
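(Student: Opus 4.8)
The plan is to mirror the structure of the proof of Theorem \ref{main1}, but to exploit the much stronger pointwise information available for Lipschitz functions. For part (1) the essential observation is that Lemma \ref{beta} provides a genuine pointwise bound on $|f(x)-f(y)|$, so that---unlike the BMO setting, where one is forced to pass through the John--Nirenberg inequality (Lemma \ref{expbmo}) and the comparison estimate (Lemma \ref{comparelem})---no distribution-function argument is needed, and in particular the factor $(1+r/\rho(x_0))^{\eta/p}$ appearing in Theorem \ref{main1} will be absent here.

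First I would fix a ball $\mathcal{B}=B(x_0,r)$ and estimate, for $x\in\mathcal{B}$,
\begin{equation*}
|f(x)-f_{\mathcal{B}}|\leq\frac{1}{|\mathcal{B}|}\int_{\mathcal{B}}|f(x)-f(y)|\,dy.
\end{equation*}
Applying Lemma \ref{beta} with exponents $\beta,\theta_1$ and using $|x-y|<2r$ for $x,y\in\mathcal{B}$, I would bound $|x-y|^{\beta}\leq(2r)^{\beta}$ and, via \eqref{wangh3} together with $1+\frac{2r}{\rho(x)}\leq2(1+\frac{r}{\rho(x)})$, convert the factor $(1+\frac{|x-y|}{\rho(x)}+\frac{|x-y|}{\rho(y)})^{\theta_1}$ into a constant multiple of $(1+\frac{r}{\rho(x_0)})^{(N_0+1)\theta_1}$, uniformly in $x,y\in\mathcal{B}$. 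This yields the uniform pointwise estimate
\begin{equation*}
|f(x)-f_{\mathcal{B}}|\leq C\,\|f\|_{\mathrm{Lip}_{\beta}^{\rho,\theta_1}}\,r^{\beta}\bigg(1+\frac{r}{\rho(x_0)}\bigg)^{(N_0+1)\theta_1},\qquad x\in\mathcal{B}.
\end{equation*}
Since the right-hand side is independent of $x$, the weighted average over $\mathcal{B}$ is controlled by the same quantity: raising to the $p$-th power, integrating against $\omega$, dividing by $\omega(\mathcal{B})$, and taking the $p$-th root leaves the bound unchanged. Dividing by $|\mathcal{B}|^{\beta/d}$ and using $|\mathcal{B}|^{\beta/d}\approx r^{\beta}$ then gives exactly the claim of part (1).

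For part (2) I would follow the scheme of part (2) of Theorem \ref{main1} verbatim, carrying along the extra factor $|\mathcal{B}|^{\beta/d}$. Starting from the identity $\frac{1}{|\mathcal{B}|}\int_{\mathcal{B}}|f-f_{\mathcal{B}}|=\frac{1}{|\mathcal{B}|}\int_{\mathcal{B}}|f-f_{\mathcal{B}}|\,\omega^{1/p}\,\omega^{-1/p}$, H\"older's inequality with exponents $p,p'$ (for $1<p<\infty$) and the hypothesis give
\begin{equation*}
\frac{1}{|\mathcal{B}|^{1+\beta/d}}\int_{\mathcal{B}}|f(x)-f_{\mathcal{B}}|\,dx
\leq C\bigg(1+\frac{r}{\rho(x_0)}\bigg)^{\theta_1-\theta_2}
\bigg(\frac{1}{|\mathcal{B}|}\int_{\mathcal{B}}\omega\bigg)^{1/p}\bigg(\frac{1}{|\mathcal{B}|}\int_{\mathcal{B}}\omega^{-p'/p}\bigg)^{1/p'},
\end{equation*}
and the $A^{\rho,\theta_2}_p$ condition absorbs the last two factors into $C[\omega]_{A^{\rho,\theta_2}_p}(1+r/\rho(x_0))^{\theta_2}$, so the powers of $(1+r/\rho(x_0))$ combine to $\theta_1$. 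The case $p=1$ is handled identically using the essential infimum and the $A^{\rho,\theta_2}_1$ condition, exactly as in \eqref{13}. Taking the supremum over all balls yields $\|f\|_{\mathrm{Lip}_{\beta}^{\rho,\theta_1}}\leq C[\omega]_{A^{\rho,\theta_2}_p}$.

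I expect part (1) to be the cleaner half; the only point requiring care is the uniform conversion of the critical-radius factors $\rho(x),\rho(y)$ into $\rho(x_0)$ via \eqref{wangh3}, which is precisely where the exponent $(N_0+1)\theta_1$ is produced. The main obstacle, if any, is merely the bookkeeping of the $|\mathcal{B}|^{\beta/d}$ normalizations in part (2) so that they cancel correctly against $r^{\beta}$; the weight manipulations themselves are routine once the hypothesis is rewritten in the form $(\int_{\mathcal{B}}|f-f_{\mathcal{B}}|^p\omega)^{1/p}\leq C|\mathcal{B}|^{\beta/d}(1+r/\rho(x_0))^{\theta_1-\theta_2}\omega(\mathcal{B})^{1/p}$.
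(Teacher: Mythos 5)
Your proposal is correct and follows essentially the same route as the paper: part (1) rests on the pointwise estimate of Lemma \ref{beta} combined with \eqref{wangh3} to produce a bound uniform over the ball (so the weight integrates out trivially and no factor $(1+r/\rho(x_0))^{\eta/p}$ appears), and part (2) repeats the H\"older/$A^{\rho,\theta_2}_p$ argument of Theorem \ref{main1} with the extra $|\mathcal{B}|^{\beta/d}$ normalization. The only cosmetic difference is that you bound $|f(x)-f_{\mathcal{B}}|$ by averaging $|f(x)-f(y)|$ over $y\in\mathcal{B}$ (applying \eqref{wangh3} to both $\rho(x)$ and $\rho(y)$), whereas the paper splits off $|f(x_0)-f_{\mathcal{B}}|$ by Minkowski and compares everything to the center value $f(x_0)$; both yield the same exponent $(N_0+1)\theta_1$.
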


\begin{theorem}\label{main4}
Let $1\leq p<q<\infty$ and $\omega\in A^{\rho,\theta_2}_{p,q}(\mathbb R^d)$ with $0<\theta_2<\infty$. Then the following
statements are true.
\begin{enumerate}
\item If $f\in \mathrm{Lip}_{\beta}^{\rho,\theta_1}(\mathbb R^d)$ with $0<\beta<1$ and $0<\theta_1<\infty$, then for any ball $\mathcal{B}=B(x_0,r)\subset\mathbb R^d$,
\begin{equation*}
\frac{1}{|\mathcal{B}|^{\beta/d}}\bigg(\frac{1}{\omega^q(\mathcal{B})}\int_{\mathcal{B}}|f(x)-f_{\mathcal{B}}|^q\omega(x)^q\,dx\bigg)^{1/q}
\leq C\bigg(1+\frac{r}{\rho(x_0)}\bigg)^{(N_0+1)\theta_1}\|f\|_{\mathrm{Lip}_{\beta}^{\rho,\theta_1}}.
\end{equation*}
\item Conversely, if there exists a constant $C>0$ such that for any ball $\mathcal{B}=B(x_0,r)\subset\mathbb R^d$ and $0<\beta<1$,
\begin{equation*}
\frac{1}{|\mathcal{B}|^{\beta/d}}\bigg(\frac{1}{\omega^q(\mathcal{B})}\int_{\mathcal{B}}|f(x)-f_{\mathcal{B}}|^q\omega(x)^q\,dx\bigg)^{1/q}\leq C\bigg(1+\frac{r}{\rho(x_0)}\bigg)^{\theta_1-\theta_2}
\end{equation*}
holds for some $\theta_1>0$, then $f\in \mathrm{Lip}_{\beta}^{\rho,\theta_1}(\mathbb R^d)$, and
\begin{equation*}
\|f\|_{\mathrm{Lip}_{\beta}^{\rho,\theta_1}}\leq C[\omega]_{A^{\rho,\theta_2}_{p,q}}.
\end{equation*}
\end{enumerate}
\end{theorem}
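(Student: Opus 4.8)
The plan is to treat Theorem \ref{main4} as the $A_{p,q}$-analogue of the Lipschitz estimate, following the architecture of the proof of Theorem \ref{main2} but replacing the John--Nirenberg input (Lemma \ref{expbmo}) by the pointwise estimate of Lemma \ref{beta}. The decisive structural difference is that for Lipschitz functions the oscillation over a ball is controlled \emph{pointwise}, so part (1) will require no weight condition whatsoever, while part (2) is a direct adaptation of the converse argument in Theorem \ref{main2}.

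For part (1), I would fix $\mathcal{B}=B(x_0,r)$ and start from $|f(x)-f_{\mathcal{B}}|\le\frac{1}{|\mathcal{B}|}\int_{\mathcal{B}}|f(x)-f(y)|\,dy$. Applying Lemma \ref{beta} gives $|f(x)-f(y)|\le C\|f\|_{\mathrm{Lip}_{\beta}^{\rho,\theta_1}}|x-y|^{\beta}\big(1+\frac{|x-y|}{\rho(x)}+\frac{|x-y|}{\rho(y)}\big)^{\theta_1}$. Since $x,y\in\mathcal{B}$ force $|x-y|<2r$, one has $|x-y|^{\beta}\le C|\mathcal{B}|^{\beta/d}$, and the $\rho$-bracket is tamed by applying \eqref{wangh3} at both $x$ and $y$: from $1+\frac{2r}{\rho(x)}\le 2C_0\big(1+\frac{r}{\rho(x_0)}\big)^{N_0+1}$ (and likewise at $y$) the whole bracket is $\le C\big(1+\frac{r}{\rho(x_0)}\big)^{N_0+1}$. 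This produces the uniform pointwise bound
\[
|f(x)-f_{\mathcal{B}}|\le C\|f\|_{\mathrm{Lip}_{\beta}^{\rho,\theta_1}}\,|\mathcal{B}|^{\beta/d}\Big(1+\frac{r}{\rho(x_0)}\Big)^{(N_0+1)\theta_1}\qquad\text{for a.e. }x\in\mathcal{B}.
\]
Because the right-hand side is independent of $x$, the weighted average $\big(\frac{1}{\omega^q(\mathcal{B})}\int_{\mathcal{B}}|f(x)-f_{\mathcal{B}}|^q\omega(x)^q\,dx\big)^{1/q}$ collapses to exactly this constant, and the prefactor $|\mathcal{B}|^{-\beta/d}$ cancels the $|\mathcal{B}|^{\beta/d}$, giving the claimed inequality with no $\eta/q$ term, in agreement with Theorem \ref{main3}(1).

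For the converse (part 2), I would mirror the computation \eqref{22}--\eqref{222} of Theorem \ref{main2}, carrying an extra factor $|\mathcal{B}|^{\beta/d}$ throughout. Writing $\frac{1}{|\mathcal{B}|}\int_{\mathcal{B}}|f-f_{\mathcal{B}}|=\frac{1}{|\mathcal{B}|}\int_{\mathcal{B}}|f-f_{\mathcal{B}}|\,\omega\cdot\omega^{-1}$, I apply H\"older with exponents $(p,p')$ when $1<p<q$ (and an essential-supremum bound when $p=1$), then the nesting inequality \eqref{above} (valid verbatim for balls) to upgrade the $L^p(\omega^p)$ average of $|f-f_{\mathcal{B}}|$ to its $L^q(\omega^q)$ average. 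Inserting the hypothesis and then the definition of $A^{\rho,\theta_2}_{p,q}$ through $\big(\int_{\mathcal{B}}\omega^q\big)^{1/q}\big(\int_{\mathcal{B}}\omega^{-p'}\big)^{1/p'}\le[\omega]_{A^{\rho,\theta_2}_{p,q}}\big(1+\frac{r}{\rho(x_0)}\big)^{\theta_2}|\mathcal{B}|^{1/q+1/p'}$ yields $\frac{1}{|\mathcal{B}|}\int_{\mathcal{B}}|f-f_{\mathcal{B}}|\le C[\omega]_{A^{\rho,\theta_2}_{p,q}}\big(1+\frac{r}{\rho(x_0)}\big)^{\theta_1}|\mathcal{B}|^{\beta/d}$, where the $\big(1+\frac{r}{\rho(x_0)}\big)^{\theta_1-\theta_2}$ from the hypothesis absorbs the $\big(1+\frac{r}{\rho(x_0)}\big)^{\theta_2}$ from the weight. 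Dividing by $|\mathcal{B}|^{1+\beta/d}$ recovers exactly the definition of $\|f\|_{\mathrm{Lip}_{\beta}^{\rho,\theta_1}}$ and the stated bound.

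The one genuinely delicate point is the exponent bookkeeping in part (2): one must verify that the powers of $|\mathcal{B}|$ — namely $|\mathcal{B}|^{1/p-1/q-1}$ from the H\"older/nesting step, $|\mathcal{B}|^{\beta/d}$ from the hypothesis, and $|\mathcal{B}|^{1/q+1/p'}$ from the $A^{\rho,\theta_2}_{p,q}$ constant — combine to leave precisely $|\mathcal{B}|^{\beta/d}$, which follows from $1/q+1/p'=1-(1/p-1/q)$. Everything else is a routine adaptation of Theorem \ref{main2}; the real work in part (1) is done entirely by the pointwise estimate of Lemma \ref{beta}, which is what removes the need for any John--Nirenberg-type machinery in the Lipschitz setting.
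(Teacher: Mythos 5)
Your proposal is correct and takes essentially the same route as the paper: part (1) rests on the pointwise estimate of Lemma \ref{beta} combined with \eqref{wangh3} to get a uniform bound on $|f(x)-f_{\mathcal{B}}|$ over $\mathcal{B}$ (the paper routes this through the center via Minkowski's inequality and $|f(x)-f(x_0)|$, while you average $|f(x)-f(y)|$ over $y\in\mathcal{B}$ — an immaterial variation), and indeed no weight condition is needed there. Part (2) is exactly the paper's argument, which simply repeats the converse step of Theorem \ref{main2} (H\"older, the nesting inequality \eqref{above}, the hypothesis, and the $A^{\rho,\theta_2}_{p,q}$ condition) with the factor $|\mathcal{B}|^{\beta/d}$ carried through; your exponent bookkeeping $1/q+1/{p'}=1-(1/p-1/q)$ is the correct verification.
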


\begin{proof}[Proof of Theorem \ref{main3}]
Following the same arguments as in the proof of Theorem \ref{main1}, we can also prove the second part (2). We only need to show the first part (1).
Fix $x_0\in\mathbb R^d$, let $\mathcal{B}=B(x_0,r)$ be the ball centered at $x_0$ and radius $r$. By using Minkowski's inequality, we get
\begin{equation*}
\begin{split}
&\frac{1}{|\mathcal{B}|^{\beta/d}}\bigg(\frac{1}{\omega(\mathcal{B})}\int_{\mathcal{B}}|f(x)-f_{\mathcal{B}}|^p\omega(x)\,dx\bigg)^{1/p}\\
&\leq\frac{1}{|\mathcal{B}|^{\beta/d}}\bigg(\frac{1}{\omega(\mathcal{B})}\int_{\mathcal{B}}|f(x)-f(x_0)|^p\omega(x)\,dx\bigg)^{1/p}
+\frac{1}{|\mathcal{B}|^{\beta/d}}\bigg(\frac{1}{\omega(\mathcal{B})}\int_{\mathcal{B}}|f(x_0)-f_{\mathcal{B}}|^p\omega(x)\,dx\bigg)^{1/p}\\
&\leq\frac{1}{|\mathcal{B}|^{\beta/d}}\bigg(\frac{1}{\omega(\mathcal{B})}\int_{\mathcal{B}}|f(x)-f(x_0)|^p\omega(x)\,dx\bigg)^{1/p}
+\frac{1}{|\mathcal{B}|^{\beta/d}}\bigg(\frac{1}{|\mathcal{B}|}\int_{\mathcal{B}}|f(x)-f(x_0)|\,dx\bigg).
\end{split}
\end{equation*}
In view of Lemma \ref{beta}, one can see that for any $x\in \mathcal{B}$,
\begin{equation*}
\begin{split}
|f(x)-f(x_0)|&\leq C\|f\|_{\mathrm{Lip}_{\beta}^{\rho,\theta_1}}|x-x_0|^{\beta}
\bigg(1+\frac{|x-x_0|}{\rho(x)}+\frac{|x-x_0|}{\rho(x_0)}\bigg)^{\theta_1}\\
&\leq C|\mathcal{B}|^{\beta/d}\|f\|_{\mathrm{Lip}_{\beta}^{\rho,\theta_1}}
\bigg(1+\frac{r}{\rho(x)}+\frac{r}{\rho(x_0)}\bigg)^{\theta_1}.
\end{split}
\end{equation*}
This, together with the estimate \eqref{wangh3}, gives us that
\begin{equation*}
|f(x)-f(x_0)|\leq C|\mathcal{B}|^{\beta/d}\|f\|_{\mathrm{Lip}_{\beta}^{\rho,\theta_1}}
\bigg(1+\frac{r}{\rho(x_0)}\bigg)^{(N_0+1)\theta_1}.
\end{equation*}
Therefore,
\begin{equation*}
\frac{1}{|\mathcal{B}|^{\beta/d}}\bigg(\frac{1}{\omega(\mathcal{B})}\int_{\mathcal{B}}|f(x)-f_{\mathcal{B}}|^p\omega(x)\,dx\bigg)^{1/p}
\leq C\bigg(1+\frac{r}{\rho(x_0)}\bigg)^{(N_0+1)\theta_1}\|f\|_{\mathrm{Lip}_{\beta}^{\rho,\theta_1}}.
\end{equation*}
This completes the proof of Theorem \ref{main3}.
\end{proof}

\begin{proof}[Proof of Theorem \ref{main4}]
Following along the same lines as that of Theorem \ref{main2}, we can also prove the second part (2). So we only need to show the first part (1). For an arbitrary fixed ball $\mathcal{B}=B(x_0,r)$, by the Minkowski inequality, we have
\begin{equation*}
\begin{split}
&\frac{1}{|\mathcal{B}|^{\beta/d}}\bigg(\frac{1}{\omega^q(\mathcal{B})}\int_{\mathcal{B}}|f(x)-f_{\mathcal{B}}|^q\omega(x)^q\,dx\bigg)^{1/q}\\
&\leq\frac{1}{|\mathcal{B}|^{\beta/d}}\bigg(\frac{1}{\omega^q(\mathcal{B})}\int_{\mathcal{B}}|f(x)-f(x_0)|^q\omega(x)^q\,dx\bigg)^{1/q}
+\frac{1}{|\mathcal{B}|^{\beta/d}}\bigg(\frac{1}{\omega^q(\mathcal{B})}\int_{\mathcal{B}}|f(x_0)-f_{\mathcal{B}}|^q\omega(x)^q\,dx\bigg)^{1/q}\\
&\leq\frac{1}{|\mathcal{B}|^{\beta/d}}\bigg(\frac{1}{\omega^q(\mathcal{B})}\int_{\mathcal{B}}|f(x)-f(x_0)|^q\omega(x)^q\,dx\bigg)^{1/q}
+\frac{1}{|\mathcal{B}|^{\beta/d}}\bigg(\frac{1}{|\mathcal{B}|}\int_{\mathcal{B}}|f(x)-f(x_0)|\,dx\bigg).
\end{split}
\end{equation*}
Arguing as in the proof of Theorem \ref{main3}, we can also obtain analogous estimate below.
\begin{equation*}
\frac{1}{|\mathcal{B}|^{\beta/d}}\bigg(\frac{1}{\omega^q(\mathcal{B})}\int_{\mathcal{B}}|f(x)-f_{\mathcal{B}}|^q\omega(x)^q\,dx\bigg)^{1/q}
\leq C\bigg(1+\frac{r}{\rho(x_0)}\bigg)^{(N_0+1)\theta_1}\|f\|_{\mathrm{Lip}_{\beta}^{\rho,\theta_1}}.
\end{equation*}
This concludes the proof of Theorem \ref{main4}.
\end{proof}
In view of the estimates \eqref{wangh1} and \eqref{wangh2}, we immediately obtain the following results.
\begin{corollary}\label{cor2}
Let $1\leq p<q<\infty$ and $\omega\in A^{\rho,\theta_2}_{p,q}(\mathbb R^d)$ with $0<\theta_2<\infty$. Then the following
statements are true.
\begin{enumerate}
\item If $f\in \mathrm{Lip}_{\beta}^{\rho,\theta_1}(\mathbb R^d)$ with $0<\beta<1$ and $0<\theta_1<\infty$, then for any ball $\mathcal{B}=B(x_0,r)\subset\mathbb R^d$,
\begin{equation*}
\frac{|\mathcal{B}|^{1/p-1/q-\beta/d}}{[\omega^p(\mathcal{B})]^{1/p}}
\bigg(\int_{\mathcal{B}}|f(x)-f_{\mathcal{B}}|^q\omega(x)^q\,dx\bigg)^{1/q}
\leq C\bigg(1+\frac{r}{\rho(x_0)}\bigg)^{(N_0+1)\theta_1+\theta_2}\|f\|_{\mathrm{Lip}_{\beta}^{\rho,\theta_1}}.
\end{equation*}
\item Conversely, if there exists a constant $C>0$ such that for any ball $\mathcal{B}=B(x_0,r)\subset\mathbb R^d$ and $0<\beta<1$,
\begin{equation*}
\frac{|\mathcal{B}|^{1/p-1/q-\beta/d}}{[\omega^p(\mathcal{B})]^{1/p}}
\bigg(\int_{\mathcal{B}}|f(x)-f_{\mathcal{B}}|^q\omega(x)^q\,dx\bigg)^{1/q}\leq C\bigg(1+\frac{r}{\rho(x_0)}\bigg)^{\theta_1-\theta_2}
\end{equation*}
holds for some $\theta_1>0$, then $f\in \mathrm{Lip}_{\beta}^{\rho,\theta_1}(\mathbb R^d)$, and
\begin{equation*}
\|f\|_{\mathrm{Lip}_{\beta}^{\rho,\theta_1}}\leq C[\omega]_{A^{\rho,\theta_2}_{p,q}}.
\end{equation*}
\end{enumerate}
\end{corollary}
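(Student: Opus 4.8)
The plan is to obtain Corollary~\ref{cor2} as a direct consequence of Theorem~\ref{main4}, invoking the two-sided comparison between $[\omega^q(\mathcal{B})]^{1/q}$ and $|\mathcal{B}|^{1/q-1/p}[\omega^p(\mathcal{B})]^{1/p}$ furnished by \eqref{wangh1} and \eqref{wangh2} (which, as noted before their derivation, hold for balls as well as cubes). No fresh analytic ingredient is required: neither the John--Nirenberg inequality nor the pointwise Lipschitz estimate of Lemma~\ref{beta} reappears, since Theorem~\ref{main4} already encodes them. The entire task is to convert the $L^q(\omega^q)$-normalization $1/[\omega^q(\mathcal{B})]^{1/q}$ occurring in Theorem~\ref{main4} into the $L^p(\omega^p)$-normalization $|\mathcal{B}|^{1/p-1/q}/[\omega^p(\mathcal{B})]^{1/p}$ appearing in the corollary, while tracking the resulting powers of $|\mathcal{B}|$ and of $(1+r/\rho(x_0))$.

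For part (1), I would start from the left-hand side of the corollary and replace $1/[\omega^p(\mathcal{B})]^{1/p}$ using \eqref{wangh1}, which after solving for $[\omega^p(\mathcal{B})]^{1/p}$ reads
\[
\frac{1}{[\omega^p(\mathcal{B})]^{1/p}}
\leq [\omega]_{A^{\rho,\theta_2}_{p,q}}\bigg(1+\frac{r}{\rho(x_0)}\bigg)^{\theta_2}\,|\mathcal{B}|^{1/q-1/p}\,\frac{1}{[\omega^q(\mathcal{B})]^{1/q}}.
\]
Substituting this, the $|\mathcal{B}|$-exponents combine as $(1/p-1/q-\beta/d)+(1/q-1/p)=-\beta/d$, so the left-hand side of Corollary~\ref{cor2}(1) is dominated by $[\omega]_{A^{\rho,\theta_2}_{p,q}}(1+r/\rho(x_0))^{\theta_2}$ times exactly the quantity $|\mathcal{B}|^{-\beta/d}[\omega^q(\mathcal{B})]^{-1/q}(\int_{\mathcal{B}}|f-f_{\mathcal{B}}|^q\omega^q)^{1/q}$ controlled by Theorem~\ref{main4}(1). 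Feeding in that bound produces the factor $(1+r/\rho(x_0))^{(N_0+1)\theta_1}$, and multiplying the two powers yields the stated exponent $(N_0+1)\theta_1+\theta_2$.

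For the converse part (2), the two comparison estimates exchange roles: here \eqref{wangh2} gives $1/[\omega^q(\mathcal{B})]^{1/q}\leq |\mathcal{B}|^{1/p-1/q}/[\omega^p(\mathcal{B})]^{1/p}$, whence
\[
\frac{1}{|\mathcal{B}|^{\beta/d}}\bigg(\frac{1}{\omega^q(\mathcal{B})}\int_{\mathcal{B}}|f(x)-f_{\mathcal{B}}|^q\omega(x)^q\,dx\bigg)^{1/q}
\leq \frac{|\mathcal{B}|^{1/p-1/q-\beta/d}}{[\omega^p(\mathcal{B})]^{1/p}}\bigg(\int_{\mathcal{B}}|f(x)-f_{\mathcal{B}}|^q\omega(x)^q\,dx\bigg)^{1/q}.
\]
Thus the hypothesis of Corollary~\ref{cor2}(2) implies the hypothesis of Theorem~\ref{main4}(2) with the same bound $C(1+r/\rho(x_0))^{\theta_1-\theta_2}$, and the conclusion of Theorem~\ref{main4}(2) then hands over both $f\in\mathrm{Lip}_{\beta}^{\rho,\theta_1}(\mathbb R^d)$ and the norm estimate $\|f\|_{\mathrm{Lip}_{\beta}^{\rho,\theta_1}}\leq C[\omega]_{A^{\rho,\theta_2}_{p,q}}$.

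The sole point demanding attention---and the closest thing to a genuine obstacle---is orienting each comparison correctly: \eqref{wangh1} supplies the upper bound needed in part (1) while \eqref{wangh2} supplies the lower bound needed in part (2), and one must confirm that in each case the $|\mathcal{B}|$-powers telescope down to precisely $|\mathcal{B}|^{-\beta/d}$, so that the surviving expression coincides with the normalization of Theorem~\ref{main4}. Once this exponent arithmetic is verified, both assertions follow at once, exactly as the phrase ``we immediately obtain'' preceding the corollary suggests.
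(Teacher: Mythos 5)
Your proposal is correct and matches the paper's intended argument exactly: the paper derives Corollary \ref{cor2} from Theorem \ref{main4} precisely ``in view of the estimates \eqref{wangh1} and \eqref{wangh2},'' and your orientation of the two comparisons (using \eqref{wangh1} for part (1) and \eqref{wangh2} for part (2)) together with the exponent arithmetic $(1/p-1/q-\beta/d)+(1/q-1/p)=-\beta/d$ is exactly what makes that one-line deduction work. No gaps; you have simply written out the details the paper leaves implicit.
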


\textbf{Concluding remarks}. Summarizing the estimates derived above, by definition, we then have the following conclusions.
\begin{corollary}\label{cor3}
Let $1\leq p<\infty$ and $\omega\in A^{\rho,\infty}_p(\mathbb R^d)$. Then the following
statements are true.
\begin{enumerate}
\item $f\in \mathrm{BMO}_{\rho,\infty}(\mathbb R^d)$ if and only if there exists a constant $C>0$ such that, for any cube $\mathcal{Q}=Q(x_0,r)\subset\mathbb R^d$,
\begin{equation*}
\bigg(\frac{1}{\omega(\mathcal{Q})}\int_{\mathcal{Q}}|f(x)-f_{\mathcal{Q}}|^p\omega(x)\,dx\bigg)^{1/p}
\leq C\bigg(1+\frac{r}{\rho(x_0)}\bigg)^{\mathcal{N}}
\end{equation*}
holds true for some $\mathcal{N}>0$. 
\item $f\in \mathrm{Lip}_{\beta}^{\rho,\infty}(\mathbb R^d)$ with $0<\beta<1$ if and only if there exists a constant $C>0$ such that, for any ball $\mathcal{B}=B(x_0,r)\subset\mathbb R^d$,
\begin{equation*}
\frac{1}{|\mathcal{B}|^{\beta/d}}\bigg(\frac{1}{\omega(\mathcal{B})}\int_{\mathcal{B}}|f(x)-f_{\mathcal{B}}|^p\omega(x)\,dx\bigg)^{1/p}
\leq C\bigg(1+\frac{r}{\rho(x_0)}\bigg)^{\mathcal{N'}}
\end{equation*}
holds true for some $\mathcal{N'}>0$. 
\end{enumerate}
\end{corollary}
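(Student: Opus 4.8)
The plan is to read Corollary \ref{cor3} off directly from Theorems \ref{main1} and \ref{main3}, using only the fact that the spaces $A^{\rho,\infty}_p(\mathbb R^d)$, $\mathrm{BMO}_{\rho,\infty}(\mathbb R^d)$ and $\mathrm{Lip}_\beta^{\rho,\infty}(\mathbb R^d)$ are, by definition, the unions over $\theta>0$ of their fixed-$\theta$ counterparts. Since part (2) is the verbatim analogue of part (1) with Theorem \ref{main3} replacing Theorem \ref{main1}, I would write out (1) carefully and only indicate the obvious modifications for (2).

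For the forward implication of (1), I would begin by choosing witnesses for the two union memberships: because $f\in\mathrm{BMO}_{\rho,\infty}(\mathbb R^d)$ there is some $\theta_1>0$ with $f\in\mathrm{BMO}_{\rho,\theta_1}(\mathbb R^d)$, and because $\omega\in A^{\rho,\infty}_p(\mathbb R^d)$ there is some $\theta_2>0$ with $\omega\in A^{\rho,\theta_2}_p(\mathbb R^d)$. Feeding these two data into Theorem \ref{main1}(1) produces the claimed weighted estimate with the explicit exponent $\mathcal{N}:=(N_0+1)\theta_1+\eta/p$, which is strictly positive, the constant $C$ absorbing the finite quantity $\|f\|_{\mathrm{BMO}_{\rho,\theta_1}}$.

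For the converse implication, I would fix a witness $\theta_2>0$ with $\omega\in A^{\rho,\theta_2}_p(\mathbb R^d)$ and then set $\theta_1:=\mathcal{N}+\theta_2$, which is positive since $\mathcal{N}>0$. With this choice $\theta_1-\theta_2=\mathcal{N}$, so the assumed bound is exactly hypothesis \eqref{assum1} of Theorem \ref{main1}(2); invoking that part yields $f\in\mathrm{BMO}_{\rho,\theta_1}(\mathbb R^d)\subset\mathrm{BMO}_{\rho,\infty}(\mathbb R^d)$ together with the stated control $\|f\|_{\mathrm{BMO}_{\rho,\theta_1}}\leq C[\omega]_{A^{\rho,\theta_2}_p}$. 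Should one wish to start from an exponent not already in the form $\theta_1-\theta_2$, the monotonicity \eqref{cc} allows one to enlarge it freely to the required shape.

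There is no genuine obstacle here, as the statement is essentially a packaging corollary; the only point demanding care is the exponent bookkeeping, namely correctly assigning which $\theta$ plays the role of $\theta_1$ and which of $\theta_2$ in each direction and verifying that the resulting exponents remain strictly positive, so that the target union membership is actually witnessed. The proof of (2) then runs identically, with balls $\mathcal B=B(x_0,r)$ in place of cubes, the normalizing factor $|\mathcal B|^{-\beta/d}$ carried along, and Theorem \ref{main3}(1)--(2) supplying the two inequalities in place of Theorem \ref{main1}.
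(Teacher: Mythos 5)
Your proposal is correct and matches the paper's own treatment: the paper derives Corollary \ref{cor3} exactly this way, by "summarizing the estimates derived above" --- i.e., reading the statement off from Theorems \ref{main1} and \ref{main3} together with the definitions of $A^{\rho,\infty}_p(\mathbb R^d)$, $\mathrm{BMO}_{\rho,\infty}(\mathbb R^d)$ and $\mathrm{Lip}_{\beta}^{\rho,\infty}(\mathbb R^d)$ as unions over $\theta>0$. Your witness bookkeeping (taking $\mathcal{N}=(N_0+1)\theta_1+\eta/p$ in the forward direction and $\theta_1=\mathcal{N}+\theta_2$ in the converse) is precisely the implicit content of the paper's one-line justification.
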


\begin{corollary}\label{cor4}
Let $1\leq p<q<\infty$ and $\omega\in A^{\rho,\infty}_{p,q}(\mathbb R^d)$. Then the following statements are true.
\begin{enumerate}
  \item $f\in \mathrm{BMO}_{\rho,\infty}(\mathbb R^d)$ if and only if there exists a constant $C>0$ such that, for any cube $\mathcal{Q}=Q(x_0,r)\subset\mathbb R^d$,
\begin{equation*}
\bigg(\frac{1}{\omega^q(\mathcal{Q})}\int_{\mathcal{Q}}|f(x)-f_{\mathcal{Q}}|^q\omega(x)^q\,dx\bigg)^{1/q}\leq C\bigg(1+\frac{r}{\rho(x_0)}\bigg)^{\mathcal{N}}
\end{equation*}
or
\begin{equation*}
\frac{|\mathcal{Q}|^{1/p-1/q}}{[\omega^p(\mathcal{Q})]^{1/p}}\bigg(\int_{\mathcal{Q}}|f(x)-f_{\mathcal{Q}}|^q\omega(x)^q\,dx\bigg)^{1/q}\leq C\bigg(1+\frac{r}{\rho(x_0)}\bigg)^{\mathcal{N}} 
\end{equation*}
holds true for some $\mathcal{N}>0$.
  \item $f\in \mathrm{Lip}_{\beta}^{\rho,\infty}(\mathbb R^d)$ with $0<\beta<1$ if and only if there exists a constant $C>0$ such that, for any ball $\mathcal{B}=B(x_0,r)\subset\mathbb R^d$,
\begin{equation*}
\frac{1}{|\mathcal{B}|^{\beta/d}}\bigg(\frac{1}{\omega^q(\mathcal{B})}\int_{\mathcal{B}}|f(x)-f_{\mathcal{B}}|^q\omega(x)^q\,dx\bigg)^{1/q}\leq C\bigg(1+\frac{r}{\rho(x_0)}\bigg)^{\mathcal{N'}}
\end{equation*}
or
\begin{equation*}
\frac{|\mathcal{B}|^{1/p-1/q-\beta/d}}{[\omega^p(\mathcal{B})]^{1/p}}
\bigg(\int_{\mathcal{B}}|f(x)-f_{\mathcal{B}}|^q\omega(x)^q\,dx\bigg)^{1/q}\leq C\bigg(1+\frac{r}{\rho(x_0)}\bigg)^{\mathcal{N'}}
\end{equation*}
holds true for some $\mathcal{N'}>0$. 
\end{enumerate}
\end{corollary}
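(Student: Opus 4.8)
The plan is to deduce Corollary \ref{cor4} directly from Theorem \ref{main2} and Corollary \ref{cor1} (for part (1)) and from Theorem \ref{main4} and Corollary \ref{cor2} (for part (2)), exploiting the fact that all of the spaces and weight classes carrying the subscript $\infty$ are defined as increasing unions over the parameter $\theta>0$. The first step is to fix, once and for all, an exponent $\theta_2>0$ for which $\omega\in A^{\rho,\theta_2}_{p,q}(\mathbb R^d)$; such a $\theta_2$ exists precisely because $\omega\in A^{\rho,\infty}_{p,q}(\mathbb R^d)=\bigcup_{\theta>0}A^{\rho,\theta}_{p,q}(\mathbb R^d)$. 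All subsequent applications of the finite-$\theta$ results will be made with this fixed $\theta_2$.

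For the necessity (``only if'') direction of part (1), I would observe that $f\in\mathrm{BMO}_{\rho,\infty}(\mathbb R^d)$ means $f\in\mathrm{BMO}_{\rho,\theta_1}(\mathbb R^d)$ for some finite $\theta_1>0$. Applying Theorem \ref{main2}(1) with this $\theta_1$ yields the first displayed inequality with the explicit exponent $\mathcal{N}=(N_0+1)\theta_1+\eta/q$, while applying Corollary \ref{cor1}(1) yields the second displayed inequality with $\mathcal{N}=(N_0+1)\theta_1+\theta_2+\eta/p$; in either case $\mathcal{N}>0$, so the stated condition holds. For the sufficiency (``if'') direction, suppose one of the two inequalities holds for some $\mathcal{N}>0$. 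The key maneuver is to set $\theta_1:=\mathcal{N}+\theta_2$, so that $\theta_1>0$ and $\theta_1-\theta_2=\mathcal{N}$; this rewrites the hypothesis in exactly the form \eqref{assum2} demanded by the converse statements. If the first inequality holds, Theorem \ref{main2}(2) gives $f\in\mathrm{BMO}_{\rho,\theta_1}(\mathbb R^d)$; if the second holds, Corollary \ref{cor1}(2) gives the same conclusion. In both cases $f\in\mathrm{BMO}_{\rho,\theta_1}(\mathbb R^d)\subset\mathrm{BMO}_{\rho,\infty}(\mathbb R^d)$, as required.

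Part (2) follows the identical template, with $\mathrm{Lip}_{\beta}^{\rho,\infty}(\mathbb R^d)=\bigcup_{\theta>0}\mathrm{Lip}_{\beta}^{\rho,\theta}(\mathbb R^d)$ in place of the BMO union and with Theorem \ref{main4} and Corollary \ref{cor2} replacing Theorem \ref{main2} and Corollary \ref{cor1}. For necessity, membership $f\in\mathrm{Lip}_{\beta}^{\rho,\theta_1}(\mathbb R^d)$ for some $\theta_1>0$ produces the two inequalities with $\mathcal{N'}=(N_0+1)\theta_1$ and $\mathcal{N'}=(N_0+1)\theta_1+\theta_2$ respectively; for sufficiency, the substitution $\theta_1:=\mathcal{N'}+\theta_2$ again matches the converse hypotheses and returns $f\in\mathrm{Lip}_{\beta}^{\rho,\theta_1}(\mathbb R^d)\subset\mathrm{Lip}_{\beta}^{\rho,\infty}(\mathbb R^d)$.

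I do not anticipate a genuine analytic obstacle here, since all of the hard estimates (the John--Nirenberg inequality of Lemma \ref{expbmo}, the pointwise bound of Lemma \ref{beta}, and the weight comparisons of Lemmas \ref{rh}--\ref{Apq}) have already been absorbed into the finite-$\theta$ statements. The only point requiring care is purely bookkeeping: the converse halves of the earlier results are phrased with the control exponent written as a difference $\theta_1-\theta_2$, so one must verify that an arbitrary positive exponent $\mathcal{N}$ (or $\mathcal{N'}$) can be realized in this form for the already-fixed $\theta_2$. The substitution $\theta_1=\mathcal{N}+\theta_2$ settles this immediately, and the ``or'' in each equivalence is handled by noting that each of the two inequalities independently suffices for the converse, while both simultaneously follow in the forward direction.
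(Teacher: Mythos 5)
Your proposal is correct and matches the paper's own (implicit) argument: the paper derives Corollary \ref{cor4} precisely by combining Theorem \ref{main2}, Corollary \ref{cor1}, Theorem \ref{main4}, and Corollary \ref{cor2} with the definitions of the union spaces $\mathrm{BMO}_{\rho,\infty}$, $\mathrm{Lip}_{\beta}^{\rho,\infty}$, and $A^{\rho,\infty}_{p,q}$ as increasing unions over $\theta>0$. Your substitution $\theta_1=\mathcal{N}+\theta_2$ to recast the hypothesis in the form $\theta_1-\theta_2$ is exactly the bookkeeping step the paper leaves to the reader.
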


{\bf Acknowledgments.}
This work was supported by Natural Science Foundation of China under Grant XJEDU2020Y002 and 2022D01C407.
%The authors would like to express their deep gratitude to the referee for many valuable comments and suggestions.

\bibliographystyle{amsplain}

\end{document}